\newtheorem{theorem}{Theorem}[section]
\newtheorem{lemma}[theorem]{Lemma}
\newtheorem{definition}[theorem]{Definition}
\newtheorem{corollary}[theorem]{Corollary}
\newtheorem{remark}[theorem]{Remark}
\newenvironment{prf} {{\bf Proof.}}{\hfill $\Box$}
\begin{document}
\title{Coorbit spaces and wavelet coefficient decay over general dilation groups}
\author{ Hartmut F\"uhr\\
\footnotesize\texttt{{fuehr@matha.rwth-aachen.de}} } \maketitle
\begin{abstract}
We study continuous wavelet transforms associated to matrix dilation
groups giving rise to an irreducible square-integrable quasi-regular
representation on ${\rm L}^2(\mathbb{R}^d)$. It turns out that
these representations are integrable as well, with respect to a wide
variety of weights, thus allowing to consistently quantify wavelet
coefficient decay via coorbit space norms. We then show that these
spaces always admit an atomic decomposition in terms of bandlimited
Schwartz wavelets. We exhibit spaces of Schwartz functions contained
in all coorbit spaces, and dense in most of them. We also present an
example showing that for a consistent definition of coorbit spaces,
the irreducibility requirement cannot be easily dispensed with.

 We then address the question how to predict wavelet coefficient decay
from vanishing moment assumptions. To this end, we introduce a new
condition on the open dual orbit associated to a dilation group: If the orbit is {\em temperately embedded}, it
is possible to derive rather general weighted mixed ${\rm
L}^{p}$-estimates for the wavelet coefficients from vanishing
moment conditions on the wavelet and the analyzed function. These
estimates have various applications: They provide very
explicit admissibility conditions for wavelets and integrable vectors, as well as sufficient
criteria for membership in coorbit spaces. As a further consequence, one obtains a transparent way of
identifying elements of coorbit spaces with certain (cosets of) tempered distributions.

 We then show that, for every dilation group in dimension two, the associated dual orbit is temperately embedded. In particular,
the general results derived in this paper apply to the shearlet group and its associated
family of coorbit spaces, where they complement and generalize the known results.
\end{abstract}

\noindent {\small {\bf Keywords:} square-integrable group representation; continuous wavelet transform; coorbit spaces; Banach frames; vanishing moments; shearlets; temperately embedded orbits}

\noindent{\small {\bf AMS Subject Classification:} 22D10; 42C15; 46E35; 42C40}

\section{Introduction}\label{introduction}

%
%

One of the starting points of wavelet theory was the introduction of
the continuous wavelet transform by Grossmann, Morlet and Paul
\cite{GrMoPa}. Building upon the group-theoretic understanding of
this transform, higher-dimensional analogues of the continuous
wavelet transform were later introduced, starting with \cite{Mu},
followed by (amongst others) \cite{BeTa,Fu96}. Since then,
higher-dimensional wavelet systems (some with, some without
underlying group structure) have been studied extensively. However,
while the representation-theoretic aspects of these transforms are
by now well-understood \cite{BeTa,Fu96,LaWeWeWi,Fu_LN,Fu10,DeMDeV}, the
same cannot be said of their approximation-theoretic properties,
with the notable exception of some specific examples such as the
similitude groups or the shearlet group and its higher-dimensional relatives. It is the purpose of this paper to initiate a systematic
study of the approximation-theoretic aspects of higher-dimensional
continuous wavelet transforms, with particular emphasis on the role
of the underlying group.

To this end, the following question is addressed: Given a function
$f$ and an admissible dilation group $H$, what do we need to know
about $f$ and $H$ to predict a certain decay of the wavelet
coefficients? With the notable exception of the similitude group
(where these questions are closely related to the theory of
isotropic homogeneous Besov spaces) and the shearlet group
(including some higher-dimensional relatives), this question has not
been studied systematically in higher dimensions.

Let us quickly introduce the basic notions of continuous wavelet
transforms in dimension $d>1$. For more details and background, we
refer the reader to the cited papers, and to Section
\ref{sect:disc_series}. We fix a closed matrix group $H < {\rm
GL}(d,\mathbb{R})$, the so-called {\bf dilation group}, and let $G =
\mathbb{R}^d \rtimes H$, which can be viewed as the group of affine
mappings generated by $H$ and all translations. Elements of $G$ are
denoted by pairs $(x,h) \in \mathbb{R}^d \times H$, and the product
of two group elements is given by $(x,h)(y,g) = (x+hy,hg)$. The 
explicit formula for the inverse of a group element $(x,h) \in G$ is given by  $(x,h)^{-1} = (-h^{-1}x,h^{-1})$. 
$G$ acts unitarily on ${\rm L}^2(\mathbb{R}^d)$ by the {\bf quasi-regular
representation} defined by
\begin{equation} \label{eqn:def_quasireg}
[\pi(x,h) f](y) = |{\rm det}(h)|^{-1/2} f\left(h^{-1}(y-x)\right)~.
\end{equation}
We assume that $H$ is an {\bf admissible dilation group}, which
in the context of this paper means that $\pi$ is an {\bf irreducible square-integrable
representation}. Square-integrability is defined by the requirement 
that there exists at least one nonzero $\psi
\in {\rm L}^2(\mathbb{R}^n)$ such that the matrix coefficient
\[
(x,h) \mapsto \langle \psi, \pi(x,h) \psi \rangle
\] is in ${\rm L}^2(G)$, which is the ${\rm L}^2$-space associated
to a left Haar measure on $G$. Such a $\psi$ is called {\bf admissible vector}; recall that
in the irreducible case it follows that the 
associated wavelet transform
\[
\mathcal{W}_\psi : {\rm L}^2(\mathbb{R}^d) \ni f \mapsto \left(
(x,h) \mapsto \langle f, \pi(x,h) \psi \rangle \right)
\] is a scalar multiple of an isometry, which gives rise
to the {\bf wavelet inversion formula}
\begin{equation} \label{eqn:wvlt_inv}
f = \frac{1}{c_\psi} \int_G \mathcal{W}_\psi f(x,h) \pi(x,h) \psi 
d\mu_G(x,h)~.
\end{equation}
This formula also explains the interest in admissible vectors, which
can be interpreted as basic building blocks with the property that
every function in ${\rm L}^2(\mathbb{R}^d)$ has a continuous
expansion in the wavelet system $(\pi(x,h) \psi)_{(x,h) \in G}$.

As already mentioned, the ${\rm L}^2$-theoretic aspects of this type
of decomposition are by now quite well understood, mostly thanks to
${\rm L}^2$-specific tools such as the Plancherel formula. Of
particular importance for an understanding of the wavelet transform
is the {\em dual action}, which is just
 the (right) linear action $\mathbb{R}^d \times H \ni(\xi,h) \mapsto
 h^T \xi$: By the results of \cite{Fu96,Fu10}, $H$ is admissible iff the dual action has a single open orbit
 $\mathcal{O} = \{ h^T \xi_0 : h \in H \} \subset \mathbb{R}^d$ of full measure (for some $\xi_0 \in \mathcal{O}$), such
 that in addition the stabilizer group $H_{\xi_0} = \{ h \in H : h^T \xi_0 = \xi_0 \}$ is
 compact. (This condition does of course not depend on the exact choice of $\xi_0 \in
 \mathcal{O}$.)
 The dual open orbit will play a key role in this paper as well.

Let me now sketch the contents of the paper. Section 2 puts the
question of wavelet coefficient decay on a proper
functional-analytic foundation, by showing that every admissible
dilation group gives rise to an {\em integrable} group
representation (see Theorem \ref{thm:ex_int_vec}, which is an
extension of a result in \cite{KaTa}). This observation allows to
invoke the powerful {\bf coorbit theory} developed by Feichtinger
and Gr\"ochenig \cite{FeiGr0,FeiGr1,FeiGr2,Gr}, which provides a
consistent and robust way of quantifying wavelet coefficient decay
by imposing norms on the coefficients. Here we focus on the spaces
$Co {\rm L}^{p,q}_v$ of (Besov-type) coorbit spaces associated to weighted
mixed ${\rm L}^{p}$-space. A particular asset of the coorbit approach is the
existence of discretizations and atomic decompositions, which are
available once certain additional technical conditions (involving
oscillation estimates) are met. In Lemma \ref{lem:osc}, we prove
that these conditions hold true if the analyzing wavelet is a
Schwartz function that is bandlimited to a compact subset of the
dual open orbit $\mathcal{O}$. Hence {\em any} such function can be
used to define an atomic decomposition (Theorem \ref{thm:at_dec}),
simultaneously valid in all coorbit spaces.

While these observations are interesting (and somewhat surprising in
their generality), they do not answer the question how to predict
wavelet coefficient decay in a fully satisfactory manner, at least
for two reasons: The first one is the obvious objection that the
coorbit definition only provides a circular answer to the question
why a function has good coefficient decay. In other words, coorbit
theory allows to {\em identify} functions that have good decay, in a
functional-analytically sound and consistent way, but it does not a
priori help {\em characterize} them.  A somewhat more subtle point
is that by definition, elements of coorbit space are contained in
the dual $\mathcal{H}_{1,w}^\sim$ of a certain Banach space of
functions on $G$. In order to understand which Fourier-analytic properties
of a signal result in a certain type of wavelet coefficient decay, it therefore seems desirable
to have an explicit embedding of coorbit spaces into, say, the space of tempered distributions,
rather than viewing them as elements of a fairly abstract dual space.

It is the purpose of the remaining sections of the paper to provide more concrete answers. To this end, we generalize a concept which may be considered as one of the cornerstones of wavelet analysis in one dimension, namely  the simple observation that vanishing moments of a wavelet, together with (local) smoothness of the analyzed signal, results in wavelet coefficient decay. 
For general dilation groups $H$, the correct generalization of this notion turns out to be that of {\em vanishing moments in $\mathcal{O}^c$,} (see Definition \ref{defn:van_mom}), where $\mathcal{O}^c$ denotes the complement of the dual orbit. Roughly speaking, the points of $\mathcal{O}^c$ can be understood as  the ``blind spots'' of the wavelet transform; i.e., they correspond to frequency information that the wavelet transform typically cannot resolve very well. Thus intuition suggests that functions that have little or no frequency content in or nearby those areas can be well approximated, and it is the purpose of Section \ref{sect:mod_embed} to make this intuition work. 

It turns out that under additional technical assumptions (formulated in the notion {\em temperately embedded dual orbit}, see Definition \ref{defn:mod_embedded}), it is indeed possible to predict 
 membership in a coorbit space from suitable smoothness and vanishing moment assumptions (Theorems \ref{thm:central}, \ref{cor:coorb_vm}). Moreover, the same arguments allow to easily identify those functions that qualify as analyzing wavelets for coorbit spaces. In particular, we show that there exist such vectors in $C_c^\infty(\mathbb{R}^d)$ (Theorem \ref{cor:L1_adm_cond_vm}). Finally, we introduce a natural closed subspace $\mathcal{F}^{-1} \mathcal{S}(\mathcal{O})$ of Schwartz functions that is continuously, and for $p,q < \infty$ densely, embedded in
$Co {\rm L}^{p,q}_v(G)$. A useful byproduct of the embedding is the dual embedding allowing to identify elements of coorbit spaces with (cosets of) tempered distributions.

In the final section of the paper, we verify that for all admissible dilation groups in dimension two, the associated open dual orbits are temperately embedded, thus making the results of the previous sections available for all wavelets in dimension two.  For the shearlet case, these results considerably extend the
previously derived results in \cite{DaKuStTe,DaStTe10,DaStTe11,DaStTe12}.

\section{Coorbit spaces associated to admissible dilation groups} \label{sect:disc_series}

In this section, we introduce the necessary notations, and comment on the relationship of
integrability versus square-integrability for quasi-regular
representations.
In the following, we simply write $\int \cdot dx$ for integration
against left Haar measure. The left Haar measure of $G$ is then
expressed as $d(x,h) = |\det(h)|^{-1} dx dh$, and the modular
function of $G$ is given by $\Delta_G(x,h) = \Delta_H(h)
|\det(h)|^{-1}$. Given $f \in {\rm L}^1(\mathbb{R}^d)$, its Fourier transform is defined as
\[
 \mathcal{F}(f)(\xi) := \widehat{f}(\xi) := \int_{\mathbb{R}^d} f(x) e^{-2\pi i \langle x,\xi \rangle} dx~,
\] with $\langle \cdot, \cdot \rangle$ denoting the euclidean scalar product on $\mathbb{R}^d$. We will use the same symbol $\mathcal{F}$ for the Fourier transform of tempered dsitributions.
 For any subspace $X \subset
\mathcal{S}'(\mathbb{R}^d)$, we let $\mathcal{F}^{-1} X$ denote its
inverse image under the Fourier transform.

 In order to avoid cluttered notation, we will occasionally use
the symbol $ X \preceq Y$ between expressions $X,Y$ involving one or
more functions or vectors in $\mathbb{R}^d$  if there exists a constant $C>0$,
independent of the functions and vectors occurring in $X$ and $Y$, such that
 $X \le CY$. We let $|\cdot|: \mathbb{R}^d \to
\mathbb{R}$ denote the euclidean norm on $\mathbb{R}^d$. This choice has
proven particularly convenient in particular for the estimates in Section \ref{sect:mod_embed},
although it is clear that any other choice of norm would just affect the constants. 
 We let $\| \cdot \|:
\mathbb{R}^{d \times d} \to \mathbb{R}^+$ denote an arbitrary norm.
If we wish to explicitly refer to the operator norm
of the induced linear operator $(\mathbb{R}^d, |\cdot|) \to  (\mathbb{R}^d, |\cdot |)$, we will use
the notation $\| g \|_\infty$. 

We will (somewhat inconsistently)
always use $|\alpha| = \sum_{i=1}^d \alpha_i$ for multiindices
$\alpha \in \mathbb{N}_0^d$, since in the following no serious confusion can arise
between vectors and multiindices.
For $r,m>0$, we let
\[
| f |_{r,m} = \sup_{x \in \mathbb{R}^d, |\alpha| \le r} (1+|x|)^{m}
|\partial^\alpha f (x)|~.
\] denote the associated Schwartz norm of a function $f: \mathbb{R}^d \to \mathbb{C}$ with suitably many partial derivatives.

Recall that a weight on a general locally compact group $G$ is by definition
a continuous function $v: G \to \mathbb{R}^+$ satisfying the
submultiplicativity condition $v(g_1g_2) \le v(g_1)v(g_2)$, for all $g_1,g_2 \in G$. For a weight $v$ on a semidirect product group $G = \mathbb{R}^d \times H$, one always has the estimate $v(x,h) \le v(x,0) v(0,h)$. In the following, we will consider weights that grow at most polynomially on the translation part, i.e. obeying 
\[
v(x,h) \le (1+|x|)^s w(h)
\] with a suitable weight $w: H \to \mathbb{R}^+$. 
One possible class of such weights are of the form $v(x,h) = (1+|x| + \|h\|_\infty)^s w(h)$, with a weight $w: H \to \mathbb{R}^+$.
Here submultiplicativity is checked by the computation
\begin{eqnarray*}
 v((x,h)(y,g)) & = & v(x+hy,hg)  \\
 & = & (1+|x+hy|+\|hg\|_{\infty})^s w(hg) \le (1+|x|+\|h\|_{\infty} + \|h\|_\infty \|g\|_\infty)^s w(h) w(g) \\
  & \le & (1+|x|+\|h\|_\infty)^s (1+|y|+\|g\|_\infty)^s w(h) w(g)~,
\end{eqnarray*}
and furthermore, we have the estimate
\begin{equation} \label{eqn:ineq_ws}
 v(x,h) \le (1+|x|)^s (1+\|h\|_\infty)^s w(h) = (1+|x|)^s w_s(h)~,
\end{equation} with the weight $w_s: H \to \mathbb{R}^+$, $w_s(h) = (1+\|h\|_\infty)^s w(h)$. 

We define, for $1 \le p,q < \infty$, the weighted mixed $L^p$-space by 
\[
L^{p,q}_v (G) = \left\{ F: G \to \mathbb{C}~:~\int_H \left(
\int_{\mathbb{R}^d} |F(x,h)|^p v(x,h)^p dx \right)^{q/p} \frac{dh}{|{\rm det}(h)|} < \infty
\right\}~,
\]
with the obvious norm, and the usual conventions regarding
identification of a.e. equal functions. We write ${\rm L}^p_v(G) =
{\rm L}^{p,p}_v(G)$. The corresponding spaces for $p=\infty$ and/or
$q = \infty$ are defined by replacing integrals with essential
suprema. We will also use
\[
{\rm L}^p_s(\mathbb{R}^d) = \left\{ f \mbox{
Borel-measurable}~:~\int_{\mathbb{R}^d} |f(x)|^p (1+|x|)^{sp} dx <
\infty \right\}~,
\] again with the obvious norm. 
Of particular interest is the space of $v$-integrable vectors, defined as
\[
\mathcal{H}_{1,v} = \{ f \in {\rm L}^2(\mathbb{R}^d)~:~
\mathcal{W}_f f \in {\rm L}^1_v(G) \}~.
\]

 The following result is known for the case of trivial dual fixed
 groups and weight $w(x,h) = \Delta_G^{1/2}(h)$, see
\cite{KaTa}. The general case needs only little adjustments. In the following,
 $C_c^\infty(\mathcal{O})$ denotes the space of $f \in
C^\infty_c(\mathbb{R}^d)$ such that ${\rm supp}(f) =
\overline{f^{-1}(\mathbb{C} \setminus \{ 0 \})} \subset
\mathcal{O}$.
\begin{theorem} \label{thm:ex_int_vec}
Assume that $\pi$ is square-integrable, and $v$ is a weight fulfilling the estimate
$v(x,h) \le (1+|x|)^s w(h)$, for some weight $w$ on $H$. 
Then $\pi$ is $v$-integrable; in fact, 
if $\psi \in \mathcal{F}^{-1} C_c^\infty(\mathcal{O})$,
then $\mathcal{W}_\psi \psi \in L^1_v(G)$.
\end{theorem}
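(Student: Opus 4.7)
The plan is to use the Fourier representation of the matrix coefficient to reduce $\mathcal{W}_\psi \psi(x,h)$ to the inverse Fourier transform of a compactly supported smooth function, and then to extract the required integrability from the compactness of the stabilizer $H_{\xi_0}$.

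A direct Plancherel / change of variables computation gives
$$\mathcal{W}_\psi\psi(x,h) = \langle \psi, \pi(x,h)\psi \rangle = |\det h|^{1/2} \int_{\mathbb{R}^d} \hat\psi(\xi) \overline{\hat\psi(h^T\xi)}\, e^{2\pi i \langle x,\xi\rangle}\, d\xi = |\det h|^{1/2} \mathcal{F}^{-1}(F_h)(x),$$
where $F_h(\xi) := \hat\psi(\xi) \overline{\hat\psi(h^T\xi)}$. Setting $K := \mathrm{supp}(\hat\psi)$, which is compact in $\mathcal{O}$ by assumption, $F_h$ is supported in $K \cap h^{-T}K \subseteq K$ and vanishes identically unless $h \in M_K := \{ h \in H : h^{-T}K \cap K \neq \emptyset \}$.

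The pivotal step is to show that $M_K$ is relatively compact in $H$. This is precisely where admissibility enters: since $H_{\xi_0}$ is compact and $\mathcal{O} \cong H/H_{\xi_0}$ as $H$-spaces, the action of $H$ on $\mathcal{O}$ is proper, and properness applied to $K$ yields relative compactness of $M_K$. For $h \in M_K$, the standard Fourier-decay estimate combined with Leibniz and the chain rule (each derivative hitting $\hat\psi(h^T\xi)$ introduces a factor of $\|h\|$) delivers $\|\partial^\alpha F_h\|_\infty \preceq \|h\|^{|\alpha|}$ with constant depending only on $\hat\psi$ and $K$; consequently, for any $N > s+d$,
$$\int_{\mathbb{R}^d} (1+|x|)^s\, |\mathcal{F}^{-1}(F_h)(x)|\, dx \preceq \|h\|^N\, \mathbf{1}_{M_K}(h).$$

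Substituting this into $\int_G |\mathcal{W}_\psi\psi(x,h)|\, v(x,h)\, d\mu_G(x,h)$ and using $v(x,h) \le (1+|x|)^s w(h)$ together with $d\mu_G(x,h) = |\det h|^{-1}\, dx\, dh$ reduces the whole question to
$$\int_{M_K} w(h)\, \|h\|^N\, |\det h|^{-1/2}\, dh,$$
which is finite because the integrand is continuous on $H$ and $\overline{M_K}$ is compact. I expect the main obstacle to be the relative compactness of $M_K$; the remaining steps are routine adaptations of the argument in \cite{KaTa} to accommodate the polynomial $x$-weight $(1+|x|)^s$.
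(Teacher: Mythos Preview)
Your proposal is correct and follows essentially the same route as the paper: both rewrite $\mathcal{W}_\psi\psi(x,h)$ via the Fourier side as $|\det h|^{1/2}\mathcal{F}^{-1}(\widehat{\psi}\cdot D_h\overline{\widehat{\psi}})(x)$, observe that the $h$-support of this function is relatively compact in $H$ (you phrase this via properness of the action, the paper via the explicit preimage $K^{-1}K$ with $K=p_{\xi_0}^{-1}(\mathrm{supp}\,\widehat{\psi})$), and then integrate a continuous function over a compact set. The only cosmetic difference is that you bound $\|\mathcal{F}^{-1}F_h\|_{{\rm L}^1_s}$ by explicit Leibniz/chain-rule derivative estimates, whereas the paper invokes continuity of $h\mapsto \widehat{\psi}\cdot D_h\widehat{\psi}$ in the Schwartz topology; both arguments yield the same conclusion.
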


\begin{proof}
Let $\psi$ be as in the theorem. We first rewrite the wavelet
transform as
\[
\mathcal{W}_\psi \psi(x,h) = |\det(h)|^{-1/2} \left( \psi \ast (\pi(0,h)\psi^*) \right) (x) = |{\rm det}(h)|^{1/2}\left( \widehat{\psi} \cdot D_h \overline{\widehat{\psi}} \right)^{\vee}(x)
\] where we used the notations $\psi^*(x) = \overline{\psi(-x)}$, and $D_h g (\xi)  = g(h^{T} \xi)$. Thus we
can use our assumption on the weight $v$ to estimate
\[
\| \mathcal{W}_\psi \psi \|_{{\rm L}^1_v} \le
 \int_{H}  \left\| \left( \widehat{\psi} \cdot D_h \widehat{\psi}\right)^\vee \right\|_{{\rm L}^1_s}
 w(h) |\det(h)|^{-1/2} dh ~.
\]
Since the
transposed action of $H$ on $\mathbb{R}^d$ is smooth, the
vector-valued mapping
\[
h \mapsto \widehat{\psi} \cdot D_h \widehat{\psi} \in C_c^\infty(\mathcal{O})
\] is continuous with respect to the Schwartz topology.  Since the Fourier transform is an
automorphism of $\mathcal{S}(\mathbb{R}^d)$, and
$\mathcal{S}(\mathbb{R}^d) \subset {\rm L}^1_s(\mathbb{R}^d)$ continuously, it follows that the mapping
\[ h \mapsto \| (\widehat{\psi} \cdot D_h \widehat{\psi})^\vee \|_{{\rm L}^1_s}
 w(h) |\det(h)|^{-1/2}
\] is continuous as well. It is also compactly supported: Fixing any
$\xi_0 \in \mathcal{O}$, let $p_{\xi_0} : H \to \mathcal{O}, h
\mapsto h^T \xi_0$. Then $p_{\xi_0}$ is continuous, and the induced canonical bijection
between quotient $H_{\xi_0} \setminus H$ and orbit $\mathcal{O}$ is a homeomorphism. Here $H_{\xi_0}$ denotes the stabilizer of $\xi_0$, which is compact by assumption. But this implies that $K = p_{\xi_0}^{-1}({\rm
supp}(\widehat{\psi}))$ is compact. Now $\widehat{\psi}
\cdot D_h \widehat{\psi} \not= 0$ implies the existence of $h_1 \in K$ such that $h_1 h
\in K$ as well, thus finally $h \in K^{-1}K$, which is compact.

Since the Haar measure of the compact set $K^{-1}K$ is finite, it
follows that $\| \mathcal{W}_\psi \psi \|_{{\rm L}^1_v}<\infty$.
\end{proof}

Now let $Y$ be a Banach function space on $G$. We assume that $Y$
fulfills the conditions of \cite[2.2]{Gr}, i.e. it is a Banach
function space continuously embedded in ${\rm L}^1_{loc}(G)$, and
fulfilling certain compatibility conditions with convolution. We
remark that the spaces ${\rm L}^{p,q}_v(G)$, which are our main
concern,  all fall in this category. The following definition spells out the compatibility condition:
\begin{definition}
A weight $v_0$ is called {\bf control weight} for $Y$ if it satisfies 
\[
 v_0(x,h) = \Delta_G(x,h)^{-1} v_0((x,h)^{-1})~, 
\]
as well as 
 \[
\max \left( \|L_{(x,h)^{\pm 1}} \|_{Y \to Y},\| R_{(x,h)} \|_{Y \to Y},\|
R_{(x,h)^{-1}} \|_{Y \to Y} \Delta_G(x,h)^{-1} \right) \le v_0(x,h)
\] where $L_{(x,h)},R_{(x,h)}: Y \to Y$ are the left and right translation operators .
\end{definition}

Note that as a byproduct of the definition, control weights are bounded from below, since for arbitrary
$(x,h) \in G$, submultiplicativity of the operator norm yields
\[ 1 = \| L_{(0,{\rm id}} \|_{Y \to Y} \le \| L_{(x,h)} \|_{Y \to Y} \| L_{(x,h)^{-1}} \|_{Y \to Y}
 \le v_0(x,h)^2~. 
\]

Now general coorbit spaces, with respect to suitable Banach function
spaces $Y$ on $G$, are defined as follows \cite{FeiGr0}: Let $v_0$ denote a control weight 
for $Y$ that is bounded from below. Fix any $0 \not= \psi_0 \in {\rm L}^2(\mathbb{R}^d)$
with $\mathcal{W}_{\psi_0} \psi_0 \in {\rm L}^1_{v_0}(G)$, and let 
\[
 \mathcal{H}_{1,v_0} = \{ f \in {\rm L}^2(\mathbb{R}^d) ~:~ \mathcal{W}_{\psi_0} f \in {\rm L}^1_v(G) \}~,
\] endowed with the norm $\| f \|_{\mathcal{H}_{1,v_0}} = \| \mathcal{W}_{\psi_0} f \|_{{\rm L}^1_v}$. 
Then $\mathcal{H}_{1,v_0}$ is a Banach space, which is invariant under $\pi$. Denote by
$\mathcal{H}_{1,v_0}^{\sim}$ the conjugate dual of
$\mathcal{H}_{1,v_0}$. Thus $\mathcal{H}_{1,v_0} \subset {\rm
L}^2(\mathbb{R}^d) \subset \mathcal{H}_{1,v_0}^{\sim}$, and the
sesquilinear map ${\rm L}^2(\mathbb{R}^d) \times \mathcal{H}_{1,v_0}
\ni (f,g) \mapsto \langle f,g \rangle$ can be uniquely extended to
$\mathcal{H}_{1,v_0}^{\sim} \times \mathcal{H}_{1,v_0}$. Hence, if we
fix a nonzero element $\psi \in \mathcal{H}_{1,v_0}$, we can define
the continuous wavelet transform of $f \in \mathcal{H}_{1,v_0}^{\sim}$
by the same formula as for elements of ${\rm L}^2(\mathbb{R}^d)$.

Now the coorbit space associated to
$Y$ is defined by picking a nonzero $\psi \in \mathcal{H}_{1,v_0}$ and then letting 
\[
{\rm Co}(Y) = \{ f \in \mathcal{H}_{1,v_0}^{\sim} : \mathcal{W}_\psi f
\in Y \}
\] with the norm $\| f \|_{{\rm Co}(Y)} = \| \mathcal{W}_\psi f
\|_Y$. A cornerstone of coorbit theory is the fact that the space $Co Y$ thus defined is independent of the analyzing vector, as well as of the precise choice of control weight; see \cite{FeiGr1} for details. 

The following lemma notes that for weighted $L^{p,q}$-spaces of the type  $v_0 (x,h) = (1+|x|+\|h \|_\infty)^s w_0(h)$ considered here, there exists an explicitly computable control weight $v_0$ of the same type.  Hence all results in this paper which state weighted integrability properties of certain matrix coefficients can be applied to decide membership in coorbit spaces $Co(Y)$, but also to decide whether a wavelet is a suitable analyzing wavelet for $Co(Y)$; we just have to switch the weight. For these reasons we will not explicitly distinguish in the subsequent results between $v$ and $v_0$.

\begin{lemma} \label{lem:weight_control}
 Let $v: G \to \mathbb{R}^+$ denote a weight with $v(x,h) \le (1+|x|)^s w(h)$. There exists a control weight $v_0$ for $Y = {\rm L}^{p,q}_v(G)$ satisfying the estimate
 \begin{equation} \label{eqn:cont_weight_sep}
  v_0(x,h) \le (1+|x|)^s w_0(h)~,
 \end{equation} with $w_0 : H \to \mathbb{R}^+$ defined by 
 \begin{eqnarray*} \nonumber
  w_0(h) & = &    (w(h)+w(h^{-1})) \max \left(\Delta_G(0,h)^{-1/q}, \Delta_G(0,h)^{1/q-1} \right)  \\  & & \times \left(|{\rm det}(h)|^{1/q-1/p} + |{\rm det}(h)|^{1/p-1/q} \right) (1+\|h\|_\infty+\|h^{-1}\|_\infty)^s~.
 \end{eqnarray*} 
 Here we use the convention that $1/\infty = 0$. 
\end{lemma}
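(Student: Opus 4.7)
My plan is to construct $v_0$ by first majorising the four operator-norm expressions from the control-weight definition, then symmetrising the majorant to enforce the modular identity, and finally extracting the claimed form from the hypothesis $v(x,h) \le (1+|x|)^s w(h)$.

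For the operator norms, I would perform the changes of variable $x \mapsto g^{-1}(x-y)$, $h \mapsto g^{-1}h$ in $\|L_{(y,g)}F\|_Y^q$ and $x \mapsto x+hy$, $h \mapsto hg$ in $\|R_{(y,g)}F\|_Y^q$. In the $L$-case the $x$-Jacobian $|\det g|$ raised to the $q/p$-th power combines with the $|\det h|^{-1}$ in the Haar measure to give an overall $|\det g|^{q/p-1}$, and the $h$-substitution is left-Haar invariant. In the $R$-case the $h$-substitution is right translation, so it produces a Jacobian $\Delta_H(g)^{-1}$ which, combined with $|\det g|$ from $|\det h|^{-1}$, yields $\Delta_H(g)^{-1}|\det g| = \Delta_G(0,g)^{-1}$. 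Using submultiplicativity of $v$ in the forms $v(y+gx',gh') \le v(y,g) v(x',h')$ and $v((x,h)(y,g)^{-1}) \le v(x,h) v((y,g)^{-1})$ then gives
\begin{align*}
\|L_{(y,g)}\|_{Y\to Y} &\le |\det g|^{1/p-1/q}\, v(y,g), \\
\|L_{(y,g)^{-1}}\|_{Y\to Y} &\le |\det g|^{1/q-1/p}\, v((y,g)^{-1}), \\
\|R_{(y,g)}\|_{Y\to Y} &\le \Delta_G(0,g)^{-1/q}\, v((y,g)^{-1}), \\
\|R_{(y,g)^{-1}}\|_{Y\to Y}\,\Delta_G(y,g)^{-1} &\le \Delta_G(0,g)^{1/q-1}\, v(y,g),
\end{align*}
where the last line uses $\Delta_G(y,g) = \Delta_G(0,g)$.

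To enforce the identity $v_0(x,h) = \Delta_G(x,h)^{-1} v_0((x,h)^{-1})$, I would let $v_1(y,g)$ be the sum of the four right-hand sides above and set
\[
v_0(y,g) := v_1(y,g) + \Delta_G(y,g)^{-1} v_1((y,g)^{-1}).
\]
A one-line computation verifies the identity, while $v_0 \ge v_1$ preserves all four operator-norm bounds; continuity and a submultiplicativity constant are inherited from those of $v$. Feeding the hypothesis $v(y,g) \le (1+|y|)^s w(g)$ into $v_0$, together with $v((y,g)^{-1}) = v(-g^{-1}y, g^{-1}) \le (1+|y|)^s (1+\|g^{-1}\|_\infty)^s w(g^{-1})$, factors $(1+|y|)^s$ out of every term. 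The remaining $g$-dependent pieces, after symmetrisation, involve $w(g), w(g^{-1})$, the powers $|\det g|^{\pm(1/p-1/q)}$, the exponents $\Delta_G(0,g)^{-1/q}$ and $\Delta_G(0,g)^{1/q-1}$, and extra $(1+\|g^{\pm 1}\|_\infty)^s$ factors. Dominating these by the symmetric quantities $(w(g)+w(g^{-1}))$, $(|\det g|^{1/q-1/p}+|\det g|^{1/p-1/q})$, $\max(\Delta_G(0,g)^{-1/q}, \Delta_G(0,g)^{1/q-1})$, and $(1+\|g\|_\infty+\|g^{-1}\|_\infty)^s$, and taking the product, produces $w_0$ in the announced form.

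The main bookkeeping obstacle is the first step: the twist $|\det h|^{-1}$ in the Haar measure interacts with both the $x$- and $h$-substitutions, and the mixed $L^{p,q}$-integrability introduces the fractional exponents $1/p-1/q$ and $1/q$, so the final fractional powers of $|\det g|$ and $\Delta_H(g)$ must be tracked carefully, keeping in mind that the $h$-substitution is left-invariant in the $L$-case but requires the modular function $\Delta_H$ in the $R$-case. Once the four operator-norm bounds are pinned down, the symmetrisation and the final assembly of $w_0$ are essentially algebraic manipulation, including the convention $1/\infty = 0$ for the endpoint cases.
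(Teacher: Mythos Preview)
Your four operator-norm estimates for $L_{(y,g)^{\pm 1}}$ and $R_{(y,g)^{\pm 1}}$ agree with the paper's, so the first step is fine. The divergence is in how the $\Delta_G$-identity is enforced. The paper does \emph{not} use the additive symmetrisation $v_0 = v_1 + \Delta_G^{-1}\,(v_1\circ\mathrm{inv})$. Instead it first introduces an explicit symmetric submultiplicative weight on $G$,
\[
v_1(x,h) = \bigl(1+|x|+|h^{-1}x|+\|h\|_\infty+\|h^{-1}\|_\infty\bigr)^s,
\]
checks $v(x,h)\le v_1(x,h)\,w(h)$ and $v((x,h)^{-1})\le v_1(x,h)\,w(h^{-1})$ (the latter via symmetry of $v_1$), and then multiplies $v_1$ by an $h$-only factor that already satisfies $\phi(h)=\Delta_G(0,h)^{-1}\phi(h^{-1})$. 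The control-weight identity thus comes for free, without generating any new $\Delta_G$-powers.

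Your additive symmetrisation does produce a genuine control weight, but the final assembly into the announced $w_0$ fails. The two $L$-bounds carry $\Delta_G$-exponent $0$; when you form $\Delta_G(y,g)^{-1}\,v_1((y,g)^{-1})$ and expand, those same $L$-terms evaluated at the inverse contribute $\Delta_G(0,g)^{-1}\cdot\Delta_G(0,g^{-1})^{0}=\Delta_G(0,g)^{-1}$. Hence the $\Delta_G$-exponents actually present in your $v_0$ are $\{0,\,-1/q,\,1/q-1,\,-1\}$, not only $\{-1/q,\,1/q-1\}$ as your summary asserts. For $1<q<\infty$ both $-1/q$ and $1/q-1$ lie strictly between $-1$ and $0$, so neither $\Delta_G(0,g)^{0}=1$ (take $\Delta_G>1$) nor $\Delta_G(0,g)^{-1}$ (take $\Delta_G<1$) is dominated by $\max\bigl(\Delta_G^{-1/q},\Delta_G^{1/q-1}\bigr)$. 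Consequently your $v_0$ is not bounded by $(1+|y|)^s w_0(g)$ with the specific $w_0$ in the statement. The paper's route sidesteps this by baking the $\Delta_G$-symmetry into the $h$-factor from the outset, so that only the two exponents $-1/q$ and $1/q-1$ ever enter.
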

\begin{prf}
We start out by determining upper bounds for the translation norms in terms of $v$ and suitable powers of the modular and determinant functions. We first consider right translation and the case $p,q< \infty$. Given $F \in {\rm L}^{p,q}_v(G)$, we use the invariance properties of the various measures and functions to compute
\begin{eqnarray*}
 \| R_{(x,h)} F\|_Y^q & = & \int_H \left( \int_{\mathbb{R}^d} |F((y,g)(x,h))|^p v(y,g)^p dy \right)^{q/p} \frac{dg}{|{\rm det}(g)|} \\
 & = & \int_H \left( \int_{\mathbb{R}^d}|F(y+gx,gh)|^p v(y,g)^p dy \right)^{q/p} \frac{dg}{|{\rm det}(g)|} \\
& = & \int_H \left( \int_{\mathbb{R}^d}|F(y,gh)|^p v(y-gx,g)^p dy \right)^{q/p} \frac{dg}{|{\rm det}(g)|} \\
& = & \int_H \left( \int_{\mathbb{R}^d} |F(y,g)|^p \underbrace{v(y-gh^{-1}x,gh^{-1})^p}_{=v((y,g)(x,h)^{-1})^p} dy \right)^{q/p} \Delta_H(h)^{-1} \frac{dg}{|{\rm det}(gh^{-1})|} \\
& \le & \int_H \left( \int_{\mathbb{R}^d} |F(y,g)|^p v(y,g)^p v((x,h)^{-1})^p dy \right)^{q/p} \Delta_H(h)^{-1} \frac{dg}{|{\rm det}(gh^{-1})|} \\
& \le &  \Delta_G(x,h)^{-1}  v((x,h)^{-1})^q \| F \|_Y^q\\ &  \le & \Delta_G(0,h)^{-1} v_1(x,h)^q \| F \|_Y^q~,
\end{eqnarray*}
hence
\begin{equation} \label{eqn:weight_left_1}
 \| R_{(x,h)} \|_{Y \to Y} \le \Delta_G(0,h)^{-1/q} v(x,h)~,
\end{equation}
and consequently 
\begin{equation}
 \| R_{(x,h)^{-1}} \|_{Y \to Y} \Delta_G(x,h)^{-1} \le \Delta_G(0,h)^{1/q-1} v((x,h)^{-1})~.
\end{equation}The computation for left translation is similar, with slightly different result:
\begin{eqnarray*}
 \| L_{(x,h)} F \|_Y^q & = &  \int_H \left( \int_{\mathbb{R}^d} |F((x,h)^{-1}(y,g))|^p v(y,g)^p dy \right)^{q/p} \frac{dg}{|{\rm det}(g)|} \\
 & = & \int_H \left( \int_{\mathbb{R}^d} |F(h^{-1}y-h^{-1}x,h^{-1} g)|^p v(y,g)^p dy \right)^{q/p} \frac{dg}{|{\rm det}(g)|} \\
 & = & \int_H \left( |{\rm det}(h)| \cdot \int_{\mathbb{R}^d} |F(y,h^{-1} g))|^p v(hy+x,g)^p dy \right)^{q/p} \frac{dg}{|{\rm det}(g)|} \\
 & = & \int_H  \left( |{\rm det}(h)| \cdot \int_{\mathbb{R}^d} |F(y, g))|^p \underbrace{v(hy+x,hg)^p}_{= v((x,h)(y,g))^p} dy \right)^{q/p} |{\rm det}(h)|^{q/p} \frac{dg}{|{\rm det}(hg)|} \\
 & \le & |{\rm det}(h)|^{q/p-1} v(x,h)^q \| F \|_Y~, 
\end{eqnarray*} where we again used submultiplicativity of $v$. Thus 
\begin{equation}
 \| L_{(x,h)} \|_{Y \to Y} \le |{\rm det}(h)|^{1/p-1/q} v(x,h)~,
\end{equation}
and  
\begin{equation} \label{eqn:weight_right_2}
 \| L_{(x,h)^{-1}} \|_{Y \to Y} \le |{\rm det}(h)|^{1/q-1/p} v((x,h)^{-1})~.
\end{equation}

Now we introduce the weight 
\[
 v_1(x,h) = \left(1+|x|+|h^{-1} x| + \|h^{-1}\|_\infty + \|h\|_\infty\right)^s
 \]
 This is a symmetric submultiplicative weight on $G$: Symmetry is clear by construction, and submultiplicativity is verified by the calculation 
\begin{eqnarray*}
v_1((x,h)(y,g)) &= &  \left(1+|x+hy| + |(hg)^{-1} (x+hy)| + \|hg\|_\infty+ \|(hg)^{-1}\|_\infty\right)^s \\
 & \le & \left(1+|x|+\|h\|_\infty |y| + |g^{-1} y| + \|g^{-1}\|_\infty |h^{-1}x| + \|h\|_\infty \|g\|_\infty +
 \|h^{-1}\|_\infty \|g^{-1}\|_\infty\right)^s  \\
 & \le & \left(1+|x|+|h^{-1} x| + \|h^{-1}\|_\infty + \|h\|_\infty\right)^s \left(1+|y|+|g^{-1} y| + \|g^{-1}\|_\infty + \|g\|_\infty\right)^s ~.
\end{eqnarray*}

Furthermore, $v(x,h) \le v_1(x,h) w(h)$. Thus, if we let
\[
 v_2(x,h) = v_1(x,h) \max \left(\Delta_G(0,h)^{-1/q}, \Delta_G(0,h)^{1/q-1} \right) (w(h)+w(h^{-1})) \left(|{\rm det}(h)|^{1/p-1/q}  + |{\rm det}(h)|^{1/q-1/p} \right)~,
\]
the inequalities (\ref{eqn:weight_left_1}) through (\ref{eqn:weight_right_2}) yield 
\[
 \max \left( \|L_{(x,h)^{\pm 1}} \|_{Y \to Y},\| R_{(x,h)} \|_{Y \to Y},\|
R_{(x,h)^{-1}} \|_{Y \to Y} \Delta_G(x,h)^{-1} \right)  \le v_2(x,h)~.
\]
In addition, symmetry of $v_1$ and of the mapping 
\[ h \mapsto  (w(h)+w(h^{-1})) (|{\rm det}(h)|^{1/p-1/q}  + |{\rm det}(h)|^{1/q-1/p} )  \]
allow to verify that 
\[
 v_2(x,h) =  \Delta_G(x,h)^{-1} v_2((x,h)^{-1})~. 
\] Thus $v_2$ is indeed a control weight. 
Finally, the inequality
\[ 
  \left(1+|x|+|h^{-1} x| + \|h^{-1}\|_\infty + \|h\|_\infty\right)^s  \le (1+|x|)^s (1+ \|h\|_\infty+\| h^{-1} \|_\infty)^s
 \] yields the estimate (\ref{eqn:cont_weight_sep}). 

In the case $q= \infty, p < \infty$, we use the fact that right translation on $G$ maps null-sets to null-sets to obtain
\begin{eqnarray*}
 \| R_{(x,h)} F\|_Y & = & {\rm ess~sup}_{g \in H} \left( \int_{\mathbb{R}^d} |F((y,g)(x,h))|^p v(y,g)^p dy \right)^{1/p} \\
 & = & {\rm ess~sup}_{g \in H}  \left( \int_{\mathbb{R}^d} |F((y,g))|^p v((y,g)(x,h)^{-1})^p dy\right)^{1/p}  \\
 & \le & v((x,h)^{-1}) \| F \|_Y~.
\end{eqnarray*}
The remaining estimates are obtained in a similar fashion.
\end{prf}

\begin{remark} \label{rem:controlling_dependence}
 Since one is often interested in using a single wavelet to characterize whole classes of coorbit spaces, it is worthwhile to study the dependence of the control weight on the space $Y$. Fixing $s_0>0$ and a weight $w$ on $H$, the Lemma shows that one can choose a weight $m$ working simultaneously for all $1 \le p,q \le \infty$ and all $0 \le s \le s_0$. For instance, letting
 \begin{eqnarray*}
  m(x,h)  & = & (1+|x|)^{s_0}(w(h)+w(h)^{-1})  \max(1,\Delta_G(0,h)^{-1})  \\ & & \times (1+\| h \|_{\infty} + \|h^{-1}\|_\infty)^{s_0}  \left(|{\rm det}(h)| + |{\rm det}(h)|^{-1} \right)~.
 \end{eqnarray*} yields $v_1(x,h) \le m(x,h)$ for all control weights $v_1$ associated to $v$ via Lemma \ref{lem:weight_control}, as long as $w$ is fixed and $s \le s_0$. Here $ 1 \le p,q \le \infty$ are arbitrary. 

Thus a wavelet in ${\rm L}^1_m(G) \subset \mathcal{H}_{1,v_1}$ can be employed for the characterization of all coorbit spaces $Co (Y)$ with $Y = {\rm L}^{p,q}_v(G)$, as long as $s \le s_0$ holds. 
\end{remark}

It is fairly intuitive that the isotropic homogeneous Besov spaces
can be understood as coorbit spaces associated to the action of the
similitude group $H = \mathbb{R}^+ \times SO(d)$ as dilation group:
As soon as one chooses an isotropic window (such as the
$\phi$-functions of Frazier and Jawerth \cite{FrJa}), the continuous
wavelet transform is constant on the $SO(d)$-cosets. Hence the
rotation group can be omitted altogether. But then we are left with
the continuous wavelet transform with respect to scalar dilations,
and the $\phi$-transform characterization of \cite{FrJa} allows to
understand these spaces as coorbit spaces, either with respect to
the irreducible representation arising from the similitude group, or
with respect to the reducible representation arising from the scalar
action. In a similar way, the anisotropic Besov space studied in
\cite{Bo} can be read as coorbit spaces associated to cyclic or
one-parameter dilation groups. Note however that in these cases the
quasi-regular representation is reducible.

An admissible dilation group for which the associated
coorbit spaces have been studied extensively is the shearlet
group, see \cite{DaKuStTe,DaStTe10,DaStTe11,DaStTe12} and the references therein. We point out
however that typically, attention there is restricted to the spaces ${\rm
Co}(L^{p,p}_v)$, and only for weights $v$ of the type $v(x,h) = w(h)$.

We will next establish atomic decompositions for the coorbit spaces. For this purpose, we need the notion of oscillation:
\begin{definition}
 Let $U \subset G$ denote a relatively compact neighborhood of the identity, and $F: G \to \mathbb{C}$ any function. We let
\[
 {\rm osc}_U(F)(x) = \sup \{ |F(x)-F(xy)|: y \in U \}~.
\]
\end{definition}

The following lemma will help to effectively deal with the determinant factor in estimating the oscillation of the wavelet transform:
\begin{lemma} \label{lem:osc_hom} Let $G$ be an arbitrary locally compact group, and $v$ a weight on $G$. 
 Let $F \in {\rm L}^1_v(G)$, and $\varphi: G \to \mathbb{R}^+$ a continuous homomorphism. Let $U$ be a relatively compact neighborhood of the identity in $G$.
If both $F$ and ${\rm osc}_U F$ are in ${\rm L}^1_v(G)$, then $ {\rm osc}_U(F \cdot \varphi)
\in {\rm L}^1_{v/\varphi}(G)$.
\end{lemma}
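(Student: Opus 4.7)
The plan is to peel off the homomorphism factor $\varphi$ from inside the oscillation, estimate the resulting error by a combination of $\mathrm{osc}_U F$ and a local supremum of $|F|$, and finally reduce the local supremum to a pointwise bound in terms of $|F|$ and $\mathrm{osc}_U F$. The assumed integrability of $F$ and $\mathrm{osc}_U F$ against $v$ then yields the claim once $\varphi(x)$ has been cancelled by the weight $v(x)/\varphi(x)$.

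First I would fix $x \in G$ and $y \in U$ and exploit that $\varphi$ is a homomorphism to write
\[
F(x)\varphi(x) - F(xy)\varphi(xy) = \varphi(x)\bigl[(F(x)-F(xy)) + F(xy)(1-\varphi(y))\bigr].
\]
Taking absolute values and then the supremum over $y \in U$ gives
\[
\mathrm{osc}_U(F\varphi)(x) \;\le\; \varphi(x)\,\mathrm{osc}_U F(x) \;+\; \varphi(x)\,C_U\,\sup_{y\in U}|F(xy)|,
\]
where $C_U := \sup_{y\in U}|1-\varphi(y)|$ is finite because $\overline{U}$ is compact and $\varphi$ is continuous.

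Next I would control the local supremum pointwise by the elementary observation that, for $y \in U$,
\[
|F(xy)| \;\le\; |F(x)| + |F(xy)-F(x)| \;\le\; |F(x)| + \mathrm{osc}_U F(x),
\]
so $\sup_{y \in U}|F(xy)| \le |F(x)| + \mathrm{osc}_U F(x)$. Substituting this into the bound above, multiplying by $v(x)/\varphi(x)$, and integrating against left Haar measure, the factor $\varphi(x)$ cancels and I am left with a sum of $\|\mathrm{osc}_U F\|_{L^1_v}$ and $C_U(\|F\|_{L^1_v} + \|\mathrm{osc}_U F\|_{L^1_v})$, both of which are finite by hypothesis.

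I do not expect a real obstacle here; the only subtle point is making sure the homomorphism property is used cleanly enough that the unweighted oscillation of $F$ (rather than some weighted variant) controls everything, which is exactly why the trick of adding and subtracting $F(xy)$ inside the bracket is needed. The finiteness of $C_U$ is the one place where continuity of $\varphi$ and relative compactness of $U$ are both used.
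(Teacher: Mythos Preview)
Your argument is correct and follows essentially the same route as the paper: an add-and-subtract step that reduces $\mathrm{osc}_U(F\varphi)$ to a combination of $\varphi(x)\,\mathrm{osc}_U F(x)$ and $\varphi(x)|F(x)|$, then dividing out $\varphi$. The only cosmetic difference is that the paper adds and subtracts $F(x)\varphi(xy)$ (picking up a harmless factor $\sup_{y\in U}\varphi(y)$ in front of $\mathrm{osc}_U F$ but landing directly on $|F(x)|$), whereas you add and subtract $F(xy)\varphi(x)$ and then need the extra line $|F(xy)|\le |F(x)|+\mathrm{osc}_U F(x)$; the net effect is the same.
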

\begin{proof}
We first note that
\begin{equation} \label{eqn:osc_hom}
 {\rm osc}_U(F \cdot \varphi)(x)
 \le \varphi(x) \left ( \sup_{y \in U} \varphi(y) \right) {\rm osc}_U
F(x) + \varphi(x) \left( \sup_{y \in U} |1-\varphi(y)| \right)
|F(x)|~,
\end{equation}
which follows from
\[
 |F(x)\varphi(x) - F(xy)\varphi(xy)| \le \varphi(xy) |F(x)-F(xy)| + \varphi(x) |1-\varphi(y)|  |F(x)|
\]
Our assumptions on $F$ now yield that the right-hand side of
(\ref{eqn:osc_hom}) is in ${\rm L}^1_{v/\varphi}(G)$
\end{proof}

The next lemma paves the way towards existence of atomic decompositions.
\begin{lemma} \label{lem:osc} Assume that $\pi$ is square-integrable, and $v$ a separable weight on $G$.
 If $\psi \in \mathcal{F}^{-1} C_c(\mathcal{O})$, there exists a neighborhood $U \subset G$ of the identity such that ${\rm osc}_U (\mathcal{W}_\psi \psi)
\in L^1_v(G)$.
\end{lemma}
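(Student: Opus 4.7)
The plan is to separate the determinant factor and apply Lemma~\ref{lem:osc_hom}. Writing
\[
\mathcal{W}_\psi \psi(x,h) = \varphi(x,h) F(x,h), \qquad \varphi(x,h) := |{\rm det}(h)|^{1/2}, \quad F(x,h) := (\widehat{\psi} \cdot D_h \overline{\widehat{\psi}})^\vee(x),
\]
and noting that $\varphi$ is a continuous homomorphism $G \to \mathbb{R}^+$, Lemma~\ref{lem:osc_hom} reduces the task to showing that both $F$ and ${\rm osc}_U F$ lie in $L^1_{v \cdot \varphi}(G)$ for some sufficiently small relatively compact neighborhood $U$ of the identity. Since $v \cdot \varphi$ is still a separable weight with modified $H$-factor $\tilde w(h) := w(h)|{\rm det}(h)|^{1/2}$, the membership $F \in L^1_{v\varphi}(G)$ follows by essentially repeating the proof of Theorem~\ref{thm:ex_int_vec} with this modified weight.

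For the oscillation, I would first invoke the compact-support argument from the proof of Theorem~\ref{thm:ex_int_vec}: $F(\cdot, h)$ vanishes unless $h$ lies in the compact set $K_0 := K^{-1}K$ with $K := p_{\xi_0}^{-1}({\rm supp}(\widehat{\psi}))$, so for relatively compact $U$ the $h$-support of ${\rm osc}_U F$ is contained in the compact enlargement $K_1 := K_0 \cup K_0 U_H^{-1}$, where $U_H$ denotes the $H$-projection of $U$. On $K_1$ one splits
\[
F(x,h) - F(x+hy, hg) = \bigl[F(x,h) - F(x, hg)\bigr] + \bigl[F(x, hg) - F(x+hy, hg)\bigr]
\]
and estimates the two brackets separately in $L^1_s(dx)$-norm: the first by uniform continuity on $K_1$ of the map $h \mapsto \widehat{\psi} \cdot D_h \overline{\widehat{\psi}}$ into a fixed space of functions compactly supported in $\mathcal{O}$, composed with continuity of $\mathcal{F}^{-1} : \mathcal{S} \to L^1_s$; the second by a mean-value estimate in terms of $|hy| \le \|h\|_\infty |y|$ times a Schwartz seminorm of $F(\cdot, hg)$, both factors being uniformly controlled for $h \in K_1$ and $(y,g) \in U$. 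Integration against $\tilde w(h) |{\rm det}(h)|^{-1}$ over the compact set $K_1$ then delivers finite $L^1_{v \varphi}$-integrability.

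The main obstacle will be ensuring that these bounds are genuinely uniform in $h \in K_1$, so that a single relatively compact $U$ works simultaneously for all such $h$; this rests on smoothness of the dual $H$-action on $\mathcal{O}$ together with the common compact support of all functions $\widehat{\psi} \cdot D_h \overline{\widehat{\psi}}$ in a fixed compact subset of $\mathcal{O}$, which lets one upgrade pointwise Schwartz continuity of $h \mapsto \widehat{\psi} \cdot D_h \overline{\widehat{\psi}}$ to uniform continuity on compact subsets of $H$ and transport it through $\mathcal{F}^{-1}$ to uniform $L^1_s$-estimates.
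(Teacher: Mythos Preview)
Your approach mirrors the paper's proof: the same reduction via Lemma~\ref{lem:osc_hom}, the same compact $h$-support argument borrowed from Theorem~\ref{thm:ex_int_vec}, and the same triangle-inequality split into a translation bracket (handled by a mean-value/Schwartz-seminorm estimate) and a dilation bracket. The only real difference is that the paper carries out the dilation estimate by an explicit mean-value inequality on the spatial side (its inequalities (\ref{eqn:osc_2})--(\ref{eqn:osc_3})), whereas you invoke continuity of $h \mapsto \widehat{\psi}\cdot D_h\overline{\widehat{\psi}}$ into $\mathcal{S}$ together with compactness; both routes work.

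One point you should make fully precise: since ${\rm osc}_U F(x,h)$ carries the supremum over $(y,g)\in U$ \emph{inside} the $x$-integral, bounding each bracket ``in $L^1_s(dx)$-norm'' is not literally sufficient---you need a pointwise majorant in $x$, uniform in $(y,g)\in U$ and $h\in K_1$, and only then integrate. This is exactly what continuity into $\mathcal{S}$ (rather than merely into $L^1_s$) buys you: boundedness of Schwartz seminorms on the compact set $K_1\overline{W}$ gives $|F(x,h)-F(x,hg)|\le C(1+|x|)^{-m}$ uniformly, which dominates the supremum. Your final paragraph gestures at this, but the argument should route through pointwise Schwartz bounds explicitly rather than through $L^1_s$. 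Note also that the paper's explicit constants are what feed directly into the subsequent statement (\ref{eqn:osc_arbitrary}) that $\|{\rm osc}_U(\mathcal{W}_\psi\psi)\|_{L^1_v}\to 0$ as $U$ shrinks, which is needed for Theorem~\ref{thm:at_dec}; your soft argument recovers this only after an additional appeal to dominated convergence.
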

\begin{proof}
Let $B_r(x) \subset \mathbb{R}^d$ denote the open ball with radius $r$ and center $x$. We write $\widetilde{\mathcal{W}}_\psi \psi(x,h) =
|{\rm det}(h)|^{1/2} \mathcal{W}_\psi \psi (x,h)$, and $v'(x,h) =
|{\rm det}(h)|^{-1/2} v(x,h)$. By Lemma \ref{lem:osc_hom}  we need
to show that ${\rm osc}_U(\widetilde{\mathcal{W}}_\psi \psi) \in
{\rm L}^1_{v'}(G)$. Let $v(x,h) \le (1+|x|)^s w(h)$, then $v'(x,h) \le (1+|x|)^s |{\rm det}(h)|^{-1/2} w(h)$. 

We fix $V = B_1(0)$ and $W = \{ h \in H : \| h-{\rm id} \|_\infty < 1/2~,~\| h^{-1} - {\rm id}\|_\infty < 1/2\}$, and $m > d+1+s$, where $s$ is the exponent of the translation part of the weight $v$. Then $U = V \times W \subset G$ is a neighborhood of the identity in $G$, and we will show finiteness of $\| {\rm osc}_U(\widetilde{\mathcal{W}}_{\psi} \psi)\|_{{\rm L}^1_v}$.

The proof proceeds in several steps. We first show the existence of a continuous function $\varphi: H \times H \to \mathbb{R}^+$,  dependent on $\psi$, such that
\begin{equation} \label{eqn:osc_1}
 \forall g,h \in H \forall x \in \mathbb{R}^d ~:~ \sup_{y \in V} \left|\widetilde{\mathcal{W}}_\psi \psi(x+hy,g) - \widetilde{\mathcal{W}}_\psi \psi(x,g) \right|  \le \varphi(h,g) \left (1+\max(0, |x|-\|h\|_\infty) \right)^{-m}
\end{equation}
For this purpose, we employ the mean value theorem, and estimate for $y \in V$
\begin{eqnarray*}
 \lefteqn{ \left|\widetilde{\mathcal{W}}_\psi \psi(x+hy,g) - \widetilde{\mathcal{W}}_\psi \psi(x,g) \right|} \\ & \le & |hy| \sum_{|\alpha|\le 1}
\sup_{z \in x+hV} \left| \partial^\alpha \left( \psi \ast (\widetilde{\pi}(0,g) \psi^*) \right) (z) \right| \\
 & \le &  \| h \|_\infty ~|\psi \ast (\widetilde{\pi}(0,g) \psi^*)|_{1,m} \sup_{z \in x+hV} (1+|z|)^{-m} \\
 & \le &  \| h \|_\infty~|\psi \ast (\widetilde{\pi}(0,g) \psi^*)|_{1,m} \left (1+\max(0, |x|-\|h\|_\infty) \right)^{-m}~,
\end{eqnarray*} where we used that $V$ is the unit ball, and the notation $\widetilde{\pi}(0,g) = |{\rm det}(g)|^{1/2}| \pi(0,g)$. 
The proof of (\ref{eqn:osc_1}) is finished when we prove that
\[
 \varphi(h,g) = \| h \|_\infty~\left|\psi \ast (\widetilde{\pi}(0,g) \psi^*)\right|_{1,m}
\] defines a continuous function on $H \times H$. Here only continuity of the last factor may not be obvious. It follows however from the continuity of $H \ni g \mapsto\widetilde{\pi}(0,g) \psi^* \in \mathcal{S}(\mathbb{R}^d)$, continuity of convolution on $\mathcal{S}(\mathbb{R}^d)$, and continuity of the Schwartz norms.

We use this estimate to prove the existence of a constant $C>0$, dependent on $\psi$, such that
\begin{equation} \label{eqn:osc_1.5}
 \int_{\mathbb{R}^d}\sup_{(y,g) \in U} \left|\widetilde{\mathcal{W}}_{\psi} \psi (x,hg) - \widetilde{\mathcal{W}}_{\psi}\psi (x+hy,hg) \right| (1+|x|)^s dx \le C   \sup_{g \in W} \varphi(h,hg) (1+\| h \|_\infty)^{m}
\end{equation}
Plugging (\ref{eqn:osc_1}) into the left-hand side of (\ref{eqn:osc_1.5}) yields
\begin{eqnarray}
\nonumber \lefteqn{ \int_{\mathbb{R}^d} \sup_{(y,g) \in U} \left|\widetilde{\mathcal{W}}_{\psi} \psi (x,hg) - \widetilde{\mathcal{W}}_{\psi}\psi (x+hy,hg) \right| (1+|x|)^s dx \le} \\
& \le & \sup_{g \in W} \varphi(h,hg) \int_{\mathbb{R}^d}   \left (1+\max(0, |x|-\|h\|_\infty) \right)^{-m} (1+|x|)^{s} dx~.
\label{zwischen}
\end{eqnarray}
Now
\begin{eqnarray*}
 \lefteqn{ \int_{\mathbb{R}^d}   \left (1+\max(0, |x|-|h|_\infty) \right)^{-m} (1+|x))^{s} dx} \\ & = &
\int_{|x|\le \| h \|_\infty} (1+|x|)^s dx + \int_{|x|>\|h \|_\infty} (1+|x|- \| h \|_\infty)^{-m} (1+|x|)^s dx \\
& \le & C_1 (1+\| h \|_\infty)^s \| h \|_\infty^d  + \int_{\mathbb{R}^d} (1+\left|~ |x|- \|h \|_\infty~ \right| )^{-m}(1+|x|)^s dx~,
\end{eqnarray*}
where $C_1$ denotes the volume of the unit ball.
Now the estimate $\left(1+\left| ~|x|- \|h\|_\infty \right| \right) (1+\|h \|_\infty) \ge (1+|x|)$ can be used to obtain
\[
\int_{\mathbb{R}^d} (1+\left| ~|x|-\|h \|_\infty ~\right|)^{-m}(1+|x|)^s dx  \le (1+\| h \|_\infty)^m \underbrace{\int_{\mathbb{R}^d} (1+|x|)^{s-m} dx}_{=: C_2}
\]
which in summary (via $m>s$) yields
\[
 (\ref{zwischen}) \le \sup_{g \in W} \varphi(h,hg) \left( C_1 (1+\| h \|_\infty)^s \| h \|_\infty^d + C_2 (1+\|h \|_\infty)^{m} \right)
 \le  C \sup_{g \in W} \varphi(h,hg) (1+\| h \|_\infty)^{m+s+d}~,
\]
 and (\ref{eqn:osc_1.5}) is proved.

We next prove that
\begin{equation} \label{eqn:osc_2}
\forall h  \in H \forall g \in W \forall y \in \mathbb{R}^d~:~ \left|\psi((hg)^{-1}y)-\psi(h^{-1}y)\right| \le \sqrt{d} |\psi|_{1,m+1} \left( 1+|h^{-1} y|/2 \right)^{-m}
\end{equation}
Indeed, by the mean value theorem, and using the definition of $W$:
\begin{eqnarray*}
 \left|\psi((hg)^{-1}y)-\psi(h^{-1}y)\right| & \le & \sqrt{d} |(hg)^{-1}y-h^{-1}y| ~|\psi|_{1,m+1} \sup_{z \in B_{|g^{-1} h^{-1}y - h^{-1} y|}(y)} (1+|z|)^{-m-1} \\
& \le & \frac{\sqrt{d}}{2}  ~|\psi|_{1,m+1} | h^{-1} y |  \left( 1+|h^{-1} y |/ 2 \right)^{-m-1} \\
& \le & \sqrt{d}  ~|\psi|_{1,m+1} \left( 1+|h^{-1} y |/ 2 \right)^{-m} ~.
\end{eqnarray*}
Here we employed the estimate $| g^{-1}h^{-1} y - h^{-1} y | \le | h^{-1} y |/2$.

We apply this observation to derive, with a positive constant $C'>0$:
\begin{equation} \label{eqn:osc_3}
 \int_{\mathbb{R}^d} \sup_{g \in W} \left| \widetilde{\mathcal{W}}_\psi \psi (x,hg) - \widetilde{\mathcal{W}}_{\psi} \psi (x,h) \right| (1+|x|)^s dx \le C' \max(1,\|h\|_\infty^m)
\end{equation}
For the proof of (\ref{eqn:osc_3}), we use the definition of the wavelet transform, as well as (\ref{eqn:osc_2}), to obtain
\begin{eqnarray*}
\lefteqn{ \int_{\mathbb{R}^d} \sup_{g \in W} \left|  \widetilde{\mathcal{W}}_\psi \psi (x,hg) - \widetilde{\mathcal{W}}_{\psi} \psi (x,h) \right| (1+|x|)^s dx }\\
& = &   \int_{\mathbb{R}^d} \sup_{g \in W}  \left| \int_{\mathbb{R}^d} \psi(x+y) \left( \psi((hg)^{-1}y)-\psi(h^{-1}y) \right)
dy \right| (1+|x|)^s dx \\
& \le &  \int_{\mathbb{R}^d} \int_{\mathbb{R}^d} |\psi(x+y)| \sqrt{d} |\psi|_{1,m+1} \left( 1+|h^{-1} y|/2 \right)^{-m} dy ~(1+|x|)^s dx \\
& = & \int_{\mathbb{R}^d} (|\psi| \ast K_h)(x) (1+|x|)^s dx \\
& = & \| ~|\psi|\ast K_h\|_{{\rm L}^1_s} \le \| \psi\|_{{\rm L}^1_s}
\| K_h \|_{{\rm L}^1_s}~,
\end{eqnarray*}
where we employed $K_h(y) =  \sqrt{d} |\psi|_{1,m+1} \left( 1+|h^{-1} y|/2 \right)^{-m}$, as well as the weighted version of Young's inequality, which holds because of the submultiplicativity of the weight $x \mapsto (1+|x|)^s$.  Using $|h^{-1}y| \ge \| h\|_\infty^{-1} |y|$, we can estimate
\begin{eqnarray*}
 \| K_h \|_{{\rm L}^1_s} & = & \sqrt{d} |\psi|_{1,m+1}\int_{\mathbb{R}^d} (1+|h^{-1}y|/2)^{-m}(1+|y|)^s dy \\
 & \le &  \sqrt{d} |\psi|_{1,m+1} \int_{\mathbb{R}^d} \max(1,2^m\|h\|_\infty^m) (1+|y|)^{s-m}dy \le C' \max(1,\|h\|_\infty^m)~.
\end{eqnarray*}

We can now finish the proof: We consider the function
\[
 \Phi: H \ni h \mapsto \int_{\mathbb{R}^d} \left({\rm osc}_U(\widetilde{\mathcal{W}}_\psi \psi) \right) (x,h) (1+|x|)^s dx~.
\] By our assumption on the weight $v$, we have
\[
\left\|  {\rm osc}_U(\widetilde{\mathcal{W}}_\psi \psi)
\right\|_{{\rm L}^1_{v'}} \le \int_H \Phi(h) |{\rm det}(h)|^{-1}
w(h) dh~.
\]

First observe that $\Phi$ is compactly supported: With the compact set $K$ from the proof of Theorem \ref{thm:ex_int_vec},
we find that ${\rm osc}_U(\widetilde{\mathcal{W}}_\psi \psi)(x,h) \not=0$ for some $x \in \mathbb{R}^d$ requires that either
$h \in K^{-1}K$, or $hg \in K^{-1} K$ for at least one $g \in W$. In any case, this can only happen for $h \in K^{-1} K W^{-1}$, which is relatively compact.
Furthermore, the triangle inequality yields the estimate
\begin{eqnarray*}
{\rm osc}_U(\widetilde{\mathcal{W}}_{\psi} \psi)(x,h) & = & \sup_{y \in V, g \in W} \left| \widetilde{\mathcal{W}}_\psi \psi(x+hy,hg)- \widetilde{\mathcal{W}}_\psi \psi (x,h) \right|  \\
& \le & \sup_{y \in V, g\in W} \left| \widetilde{\mathcal{W}}_\psi \psi (x+hy,hg) - \widetilde{\mathcal{W}}_\psi \psi(x,hg) \right| +
 \sup_{g\in W} \left| \widetilde{\mathcal{W}}_\psi \psi (x,hg) - \widetilde{\mathcal{W}}_\psi \psi(x,h) \right|
\end{eqnarray*}
Now (\ref{eqn:osc_1.5}) and (\ref{eqn:osc_3}) provide us with
\begin{eqnarray*}
 \Phi(h) \le C   \sup_{g \in W} \varphi(h,hg) (1+\| h \|_\infty)^{m} + C' \max(1,\|h\|_\infty^m)~,
\end{eqnarray*}
which is bounded on $ K^{-1} K U^{-1}$. Since in addition $w|{\rm det}|^{-1}$ is bounded on compact sets, the proof is finished.
\end{proof}

The lemma implies together with dominated convergence that
\begin{equation} \label{eqn:osc_arbitrary}
\|  {\rm osc}_U (\mathcal{W}_\psi \psi)  \|_{{\rm L}^1_v} \to 0
\end{equation} as $U$ runs through a neighborhood base of the identity.

As a consequence, we obtain the existence of atomic decompositions. First we need to define $U$-denseness and $U$-separatedness.
\begin{definition}
 Let $U \subset G$ denote a neighborhood of the identity, and $Z=(z_i)_{i \in I} \subset G$.
\begin{enumerate}
 \item[(a)] The family $(z_i)_{i \in I}$ is called {\bf $U$-dense}, if $\bigcup_{i \in I} z_i U = G$.
 \item[(b)] The family $(z_i)_{i \in I}$ is called {\bf $U$-separated}, if $z_i U \cap z_j U = \emptyset$, whenever $i \not=j$. It is called {\bf separated}, if there exists a neighborhood $U$ of unity such that it is $U$-separated. It is called {\bf relatively separated} if it is the finite union of separated families.
\end{enumerate}
\end{definition}
Note that $U$-dense, relatively separated families always exist, for every neighborhood $U$ of the identity.

We can now apply the general machinery of \cite{Gr} to derive the
existence of atomic decompositions. As a further ingredient for
this, we require a Banach sequence space $Y_d(Z)$, which is
associated to the Banach function space $Y$ according to
\cite[2.3]{Gr}:
 Pick an arbitrary compact neighborhood $W$ of the identity, and define
 \[
  \| (c_z)_{z \in Z} \|_{Y_d} = \left\| \sum_{z \in Z} |c_z| \mathbf{1}_{z W} \right\|_{Y}~.
 \]
Let $Y_d = \{ (c_z)_{z \in Z} \in \mathbb{C}^{Z}: \| (c_z)_{z \in Z} \|_{Y_d}< \infty \}$.

If $Y = {\rm L}^{p,q}_v(G)$, for some weight $v$, it can be shown that the associated coefficient space norm is equivalent to a discrete weighted $\ell^{p,q}$-norm, i.e.
\begin{equation} \label{eqn:discrete_norm_equiv}
 \left\| (c_{j,k}) \right\|_{Y_d} \asymp \left( \sum_{j \in J} |{\rm det}(h_j)|^{q/p-1}\left( \sum_{k \in K} \left( |c_{j,k}| v(h_j x_k, h_j) |{\rm det}(h_j)|^{1/p-1/q} \right)^p \right)^{q/p} \right)^{1/q}
\end{equation} with the usual modifications for $p= \infty$ and/or $q=\infty$.  This implies in particular that the finitely supported sequences are dense in $Y_d$. 
 
 The following theorem combines \cite[Theorem S]{Gr} and \cite[Theorem
T]{Gr}, which are applicable because of (\ref{eqn:osc_arbitrary}).
\begin{theorem} \label{thm:at_dec}
Let $\psi \in \mathcal{F}^{-1} C_c^\infty(\mathcal{O}) \setminus \{
0 \}$, let $Y= {\rm L}^{p,q}_w(G)$ with $1\le p,q < \infty$. Then
there exists a neighborhood $U \subset G$ of unity such that for all
$U$-dense, relatively separated families $(z_i)_{i \in I} \subset
G$, the following statements are true:
\begin{enumerate}
 \item[(a)] There is a linear bounded map $C:  Co Y \to Y_d(Z)$ with the property that, for all $f \in Co Y$,
 \[
  f = \sum_{i \in I} C(f) (z_i) \pi(z_i) \psi~,
 \] with unconditional convergence in $\| \cdot \|_{Co Y}$.
\item[(b)] Conversely, for every sequence $(c(z_i))_{i \in I} \in Y_d(Z)$, the sum
\[
 g = \sum_{i \in I} c(z_i) \pi(z_i) \psi
\] converges unconditionally in $\| \cdot \|_{Co Y}$, with $\| g \|_{Co Y} \preceq \| (c(z_i))_{i \in I} \|_{Y_d(Z)}$.
 \item[(c)] The norms $\| f \|_{Co Y}$ and $\| (\mathcal{W}_\psi f (z_i))_{i \in I}\|_{Y_d(Z)}$ are equivalent. Moreover,
$ f \in {Co Y}$ iff $ (\mathcal{W}_\psi f (z_i))_{i \in I} \in Y_d(Z)$.
\end{enumerate}
\end{theorem}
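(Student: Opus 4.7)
The plan is to deduce all three parts from Theorems~S and~T of \cite{Gr}, whose hypotheses amount to: (i) a nonzero window $\psi \in \mathcal{H}_{1,v_0}$ for some control weight $v_0$ of $Y$; (ii) the Banach function space $Y$ satisfying the convolution-compatibility axioms of \cite[Sect.~2.2]{Gr}, which is already recorded to hold for $Y = {\rm L}^{p,q}_w(G)$ with $1 \le p,q < \infty$; and (iii) the quantitative smallness $\|{\rm osc}_U(\mathcal{W}_\psi \psi)\|_{{\rm L}^1_{v_0}} < \varepsilon$ for an explicit threshold $\varepsilon$ arising from the perturbation arguments in \cite{Gr}.

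First I would manufacture the control weight. By Lemma \ref{lem:weight_control} applied to $w$ (which by the standing hypothesis of the paper obeys a bound of the form $w(x,h) \le (1+|x|)^s \tilde w(h)$), one obtains a control weight $v_0$ for $Y$ of the same structural form $v_0(x,h) \le (1+|x|)^s w_0(h)$. Since $\psi \in \mathcal{F}^{-1} C_c^\infty(\mathcal{O})$, Theorem \ref{thm:ex_int_vec} applied with this weight $v_0$ yields $\mathcal{W}_\psi \psi \in {\rm L}^1_{v_0}(G)$, and consequently $\psi \in \mathcal{H}_{1,v_0}$, so $\psi$ qualifies as an analyzing window for ${\rm Co}\,Y$.

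Next I would verify the oscillation smallness. Lemma \ref{lem:osc} applied with $v_0$ produces a neighborhood $U_1$ of the identity for which ${\rm osc}_{U_1}(\mathcal{W}_\psi \psi) \in {\rm L}^1_{v_0}(G)$; equation (\ref{eqn:osc_arbitrary}) then guarantees $\|{\rm osc}_U(\mathcal{W}_\psi \psi)\|_{{\rm L}^1_{v_0}} \to 0$ as $U$ runs through a neighborhood base of the identity inside $U_1$. Hence $U$ can be chosen small enough that condition~(iii) is fulfilled. I expect this shrinkage step — together with the sanity check that the separability hypothesis of Lemma \ref{lem:osc} is satisfied by the control weight $v_0$ delivered above — to be the central technical point of the argument; the remainder is essentially bookkeeping within the Feichtinger--Gr\"ochenig framework.

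With such a $U$ fixed, and $(z_i)_{i \in I}$ any $U$-dense, relatively separated family (whose existence was noted right after the definition), Theorem~S of \cite{Gr} supplies the bounded coefficient operator $C : {\rm Co}\,Y \to Y_d(Z)$ and the unconditionally convergent atomic expansion of part~(a), while Theorem~T of \cite{Gr} provides the synthesis bound of part~(b) along with the sampling norm equivalence and membership characterization of part~(c). The independence of ${\rm Co}\,Y$ from the choice of analyzing vector within $\mathcal{H}_{1,v_0}$, recorded immediately after the definition of ${\rm Co}(Y)$, then justifies computing the coorbit norm with this specific $\psi$ in all three conclusions.
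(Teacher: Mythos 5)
Your proposal is correct and follows essentially the same route as the paper, which proves this theorem simply by citing Theorems~S and~T of \cite{Gr} and noting that their hypotheses are met because of the oscillation estimate (\ref{eqn:osc_arbitrary}) derived from Lemma \ref{lem:osc}. Your write-up merely makes explicit the bookkeeping (control weight via Lemma \ref{lem:weight_control}, window admissibility via Theorem \ref{thm:ex_int_vec}) that the paper leaves implicit.
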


\begin{remark}
 The neighborhood $U$ in Theorem (\ref{thm:at_dec}) is chosen according to the requirement that
\[
 \| {\rm osc}_U W_{\psi} \psi \|_{L^1_{v_0}} < 1~. 
\] This shows that the same sampling density works for all coorbit spaces $Y$ for which $v_0$ is controlling. Thus, for the same range of weighted ${\rm L}^{p,q}$-spaces that we found in Remark \ref{rem:controlling_dependence}, it is possible to choose a wavelet $\psi$ and a sampling set $Z$ such that the associated wavelet system is a Banach frame in those spaces simultaneously.
\end{remark}

Repeating the proof of Theorem \ref{thm:ex_int_vec}, with the obvious modifications, allows to identify a space of Schwartz functions contained in all coorbit spaces. The density statement is a consequence of Part (a) of Theorem \ref{thm:at_dec}.
\begin{corollary} \label{cor:C_c_O_dense}
 Let $Y= {\rm L}^{p,q}_w(G)$ with $1 \le p,q \le \infty$. Then $\mathcal{F}^{-1} C_c^\infty(\mathcal{O}) \subset Co Y$. The inclusion is dense for $p,q < \infty$.
\end{corollary}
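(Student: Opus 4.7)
The plan is to prove the corollary in two stages, as the authors' hint already suggests: first the inclusion $\mathcal{F}^{-1} C_c^\infty(\mathcal{O}) \subset Co Y$ by a direct adaptation of the proof of Theorem~\ref{thm:ex_int_vec}, and then the density statement for $p,q<\infty$ as an application of the atomic decomposition in Theorem~\ref{thm:at_dec}.

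For the inclusion, I would fix a nonzero analyzing wavelet $\psi_0 \in \mathcal{F}^{-1} C_c^\infty(\mathcal{O})$ for $Co Y$; Theorem~\ref{thm:ex_int_vec} applied to a control weight $v_0$ supplied by Lemma~\ref{lem:weight_control} guarantees such a $\psi_0 \in \mathcal{H}_{1,v_0}$. Given an arbitrary $\psi \in \mathcal{F}^{-1} C_c^\infty(\mathcal{O})$, Plancherel yields
\[
\mathcal{W}_{\psi_0}\psi(x,h) = |\det(h)|^{1/2}\bigl(\widehat{\psi}\cdot \overline{D_h \widehat{\psi_0}}\bigr)^{\vee}(x)~.
\]
The compactness reasoning from Theorem~\ref{thm:ex_int_vec}, using the homeomorphism $H_{\xi_0}\setminus H \cong \mathcal{O}$, adapts verbatim (with two different wavelets $\psi,\psi_0$) to show that $h \mapsto \widehat{\psi}\cdot\overline{D_h\widehat{\psi_0}}$ is continuous into $C_c^\infty(\mathcal{O})$ equipped with the Schwartz topology, and supported on a compact subset of $H$. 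Since $\mathcal{F}$ maps $\mathcal{S}(\mathbb{R}^d)$ continuously into ${\rm L}^p_s(\mathbb{R}^d)$ for all $p,s$, the function $h \mapsto |\det(h)|^{1/2}\|(\widehat{\psi}\cdot\overline{D_h\widehat{\psi_0}})^{\vee}\|_{{\rm L}^p_s}$ is continuous and compactly supported on $H$. Plugging this into the definition of $\|\cdot\|_{{\rm L}^{p,q}_v}$ via $v(x,h)\le(1+|x|)^s w(h)$, one reduces $\|\mathcal{W}_{\psi_0}\psi\|_{{\rm L}^{p,q}_v}$ to the $q$-th root of the integral of a continuous, compactly supported function on $H$, which is finite; the cases $p=\infty$ or $q=\infty$ are treated by the obvious essential-supremum variants.

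For density when $p,q<\infty$, I would first observe that $\mathcal{F}^{-1} C_c^\infty(\mathcal{O})$ is $\pi$-invariant: a direct computation gives $\widehat{\pi(x,h)\psi}(\xi) = |\det(h)|^{1/2} e^{-2\pi i \langle x,\xi\rangle} \widehat{\psi}(h^T\xi)$, whose support $h^{-T}\,{\rm supp}(\widehat{\psi})$ remains a compact subset of $\mathcal{O}$ by $H^T$-invariance of the dual orbit. Now fix a nonzero $\psi \in \mathcal{F}^{-1} C_c^\infty(\mathcal{O})$ and apply Theorem~\ref{thm:at_dec}: every $f \in Co Y$ admits the expansion $f = \sum_{i \in I} C(f)(z_i)\,\pi(z_i)\psi$ with unconditional convergence in $\|\cdot\|_{Co Y}$. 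The norm equivalence~(\ref{eqn:discrete_norm_equiv}) shows that for $p,q<\infty$ the finitely supported sequences are dense in $Y_d$, so finite partial sums of this expansion approximate $f$ in $Co Y$; each such partial sum is a finite linear combination of the $\pi(z_i)\psi \in \mathcal{F}^{-1} C_c^\infty(\mathcal{O})$, and hence belongs to $\mathcal{F}^{-1} C_c^\infty(\mathcal{O})$ itself.

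No real analytical obstacle is expected; the argument is essentially bookkeeping on top of the earlier results. The only point requiring attention is that admissibility of $\psi_0$ in the coorbit sense refers to $\mathcal{H}_{1,v_0}$ for the control weight $v_0$ of Lemma~\ref{lem:weight_control} rather than the original $v$, but since $\mathcal{F}^{-1} C_c^\infty(\mathcal{O}) \subset \mathcal{H}_{1,v_0}$ by Theorem~\ref{thm:ex_int_vec} applied to $v_0$, the same class of wavelets does double duty both as analyzing vectors and as the target space, and the density argument closes neatly.
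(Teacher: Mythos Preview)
Your proposal is correct and follows essentially the same approach as the paper: the inclusion is obtained by adapting the compact-support argument from Theorem~\ref{thm:ex_int_vec} to the mixed ${\rm L}^{p,q}_v$-norm, and density follows from Part~(a) of Theorem~\ref{thm:at_dec} together with the $\pi$-invariance of $\mathcal{F}^{-1} C_c^\infty(\mathcal{O})$. You supply more detail than the paper's one-line sketch (in particular the explicit verification of $\pi$-invariance and the role of the control weight $v_0$), but the underlying argument is the same.
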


\subsection{Window dependence of coorbit spaces in the reducible case}

As already mentioned, one of the key properties provided by
coorbit space theory is {\em consistency}: The definition of ${\rm
Co}(Y)$ is independent of the choice of the analyzing wavelet $\psi
\in \mathcal{H}_{1,v}$. This property ensures that ``nice''
behaviour of the wavelet coefficients $\mathcal{W}_\psi f$ of a
function $f$ (as quantified by the coorbit space norm) can be
attributed to the analyzed function $f$, rather than to the
analyzing wavelet $\psi$. It turns out that this property crucially
depends on irreducibility of the underlying representation.  While
the results of \cite{FeiGr0,FeiGr1,FeiGr2} reflect this, and are
only formulated for irreducible representations, it is not always
easy for the reader to pinpoint where precisely the requirement is
needed. Moreover, recent investigations of wavelet inversion
formulae (such as \cite{FuMa,LaWeWeWi,Fu_LN}) and of coorbit theory
\cite{Bo,ChOl1,ChOl2} beyond the irreducible setting indicate that
irreducibility may not be essential for the development of coorbit
space theory. For instance, the authors of \cite{ChOl1} emphasize
the property that the reproducing kernel associated with the wavelet
inversion formula is integrable, as crucial for their approach,
rather than irreducibility.

This subsection gives an explicit example of a
reducible representation with associated integrable reproducing kernels, for
which consistency is lost. This result is of some independent
interest, as the above-mentioned sources from the literature
indicate: If one discards the irreducibility condition, more care and/or additional considerations are required to justify a particular choice of wavelet and the definition of the associated function spaces. In the context of this paper, the main purpose of the example is to explain why we focus on irreducible representations.

We consider the group $G = \mathbb{R}^2 \rtimes \mathbb{R}^+$, with
the multiplicative group acting diagonally, and let $\pi$ denote the
quasi-regular representation of $G$ acting on ${\rm
L}^2(\mathbb{R}^2)$. We will construct two functions
$f, g \in {\rm L}^2(G)$ with the following properties:
\begin{enumerate}
 \item[(i)] The wavelet transforms $\mathcal{W}_f,\mathcal{W}_g: {\rm L}^2(\mathbb{R}^2) \to {\rm L}^2(G)$ are both well-defined and isometric. Hence $f$ and $g$ give rise to wavelet reconstruction formulae, and the image spaces of $\mathcal{W}_f,\mathcal{W}_g$ have reproducing kernels.
 \item[(ii)] The reproducing kernels associated to each analyzing wavelet are integrable. More precisely, $\mathcal{W}_g g = \mathcal{W}_f f \in {\rm L}^1_v(G)$, for any continuous weight of the form $v(x,s) = v_0(s)$.
 \item[(iii)]  $\mathcal{W}_f g \not\in {\rm L}^1_v(G)$. In particular, the coorbit spaces $Co ({\rm L}^1_v(G))$ defined using either $\mathcal{W}_f$ or $\mathcal{W}_g$ as analyzing wavelets do not coincide.
\end{enumerate}

For this purpose, pick $0 \not= \psi_0 \in
C_c^\infty(\mathbb{R}^+)$, satisfying
\[
\int_0^\infty \frac{|\psi_0(s)|^2}{s} ds = 1~.
\]
Then, define $f \in {\rm L}^2(\mathbb{R}^2)$ via $\widehat{f}(\xi) =
\psi_0(|\xi|)$. In addition, define $g \in {\rm L}^2(\mathbb{R}^2)$
by
\[
\widehat{g}(\xi) = {\rm sign}(\langle \xi, (1,0)^T \rangle)~\cdot
\widehat{f}(\xi) .
\] Now by construction of $f$ and $g$, one has
\[
 \int_{\mathbb{R}^+} \frac{|\widehat{f}(s \xi)|^2}{s} ds =  \int_{\mathbb{R}^+} \frac{|\widehat{g}(s \xi)|^2}{s} ds =1
\] and this implies (i), by \cite[Theorem 1]{FuMa}.

The computation
\[
\mathcal{W}_f f (x,s) = |s| \left( \widehat{f} \cdot (D_{s}
\overline{\widehat{f})} \right)^{\vee}(x)~,
\] and the fact that $\widehat{f} \in C_c^\infty(\mathbb{R}^2)$
imply that indeed $\mathcal{W}_f f \in {\rm L}^1_v(G)$, by the same reasoning as in the proof of
Theorem \ref{thm:ex_int_vec}.
On the other hand, the fact that ${\rm sign}(\langle \xi, (1,0)^T
\rangle)$ is constant on all half-lines (i.e., dual orbits), implies
that
\[
\mathcal{W}_g g (x,s) = |s| \left( \widehat{g} \cdot (D_{s}
\overline{\widehat{g}} ) \right)^{\vee}(x) = |s| \left( \widehat{f}
\cdot (D_{s} \overline{\widehat{f}}) \right)^{\vee}(x) = \mathcal{W}_f f
(x,s)~.
\] Thus (ii) is established.

Finally, the same calculations yield
\[
\mathcal{W}_f g (x,s) = |s| \left( \widehat{g} D_{s} \overline{\widehat{f}}
\right)^{\vee}(x) = |s| \left( {\rm sign}(\langle \cdot, (1,0)^T
\rangle)\cdot \widehat{f} \cdot (D_{s^{-1}} \overline{\widehat{f}})
\right)^{\vee}(x)~.
\] Now, whenever the function $ \xi \mapsto {\rm sign}(\langle \xi, (1,0)^T
\rangle)\cdot \widehat{f}(\xi) \cdot (D_{s}
\overline{\widehat{f}}(\xi))$ is not identically zero, it is
discontinuous near the $y$-axis, in a way that cannot be remedied by
changing the function on a set of measure zero. This yields that
$\mathcal{W}_f g (\cdot, s) \not\in {\rm L}^1(\mathbb{R}^2)$, for
all $s \in \mathbb{R}^+$ with $\mathcal{W}_f g(\cdot,s) \not= 0$. By
property (i) (and because $g \not= 0$), this set has positive
measure. But then $\mathcal{W}_f g \not\in {\rm L}^1_v(G)$.

%
\section{Temperately embedded dual orbits and vanishing moments} \label{sect:mod_embed}

This section contains the central estimates of this paper. We start
out by defining the notion of temperately embedded dual orbits.
For this purpose, the following properties of open orbits established in \cite{Fu98} will be useful:
\begin{lemma} \label{lem:prop_open_orbit}
 Let $H$ denote an admissible matrix group, and $\mathcal{O}$ the associated open dual orbit.
 \begin{enumerate}
  \item[(a)] For all $\xi \in \mathbb{R}^d$: If $ \xi \in \mathcal{O}$, then $\mathbb{R}^+ \cdot \xi \subset \mathcal{O}$. 
  \item[(b)] There exists a polynomial $P_{\mathcal{O}}$ with real coefficients of degree $k \le 2d$ such that 
  \[
   \mathcal{O} = \{ \xi \in \mathbb{R}^d~:~P(\xi) \not= 0 \}~. 
  \]
 \end{enumerate}

\end{lemma}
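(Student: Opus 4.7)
For part (a), the plan is to exploit the fact that $\mathcal{O}$ is an \emph{open} set of full Lebesgue measure which is simultaneously a single $H$-orbit. Given $\lambda>0$, the dilated set $\lambda^{-1}\mathcal{O}$ is again open of full measure, hence $\mathcal{O}\cap\lambda^{-1}\mathcal{O}\neq\emptyset$. Pick $\eta'$ in this intersection, so that both $\eta'$ and $\eta:=\lambda\eta'$ lie in $\mathcal{O}$. Since $\mathcal{O}$ consists of a single orbit, there exists $h\in H$ with $h^T\eta'=\lambda\eta'$. Now, for an arbitrary $\xi\in\mathcal{O}$, write $\xi=g^T\eta'$ with $g\in H$; since scalar multiplication commutes with the linear $H$-action, we obtain
\[
 \lambda\xi \;=\; \lambda\,g^T\eta' \;=\; g^T(\lambda\eta') \;=\; g^T h^T\eta' \;=\; (hg)^T\eta' \;\in\;\mathcal{O},
\]
which is the claim.

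For part (b), the key observation is that under the admissibility assumption, $\mathcal{O}^c$ coincides with the set of $\xi$ whose $H$-orbit has dimension strictly smaller than $d$. Indeed, any $\xi$ with a $d$-dimensional orbit automatically has an open orbit (orbits of maximal dimension in a smooth action are open), and by the uniqueness of the open orbit of full measure that orbit must be $\mathcal{O}$ itself; conversely, points in $\mathcal{O}$ obviously have a $d$-dimensional orbit. This reduces the problem to writing the ``non-maximal-rank'' locus as the zero set of a single real polynomial.

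To do so, I fix a basis $X_1,\dots,X_n$ of the Lie algebra $\mathfrak{h}$ of $H$. Differentiating the $H$-action at a point $\xi$ yields the linear map $\mathfrak{h}\to\mathbb{R}^d$, $X\mapsto X^T\xi$, whose image is the tangent space to the orbit through $\xi$. With respect to the chosen basis, this map has matrix
\[
 A(\xi) \;=\; \bigl[X_1^T\xi \,\big|\, X_2^T\xi \,\big|\,\cdots\,\big|\, X_n^T\xi\bigr] \;\in\;\mathbb{R}^{d\times n},
\]
whose entries are linear in $\xi$. The orbit through $\xi$ has dimension $d$ precisely when some $d\times d$ minor of $A(\xi)$ is nonzero. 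Each such minor is a polynomial in $\xi$ of degree $d$, so the polynomial
\[
 P_{\mathcal{O}}(\xi) \;:=\; \sum_{\substack{I\subset\{1,\dots,n\}\\|I|=d}} \bigl(\det A_I(\xi)\bigr)^2
\]
has real coefficients and degree at most $2d$, and it satisfies $P_{\mathcal{O}}(\xi)\neq 0$ iff the orbit of $\xi$ has full dimension, i.e.\ iff $\xi\in\mathcal{O}$. (The case $n<d$ cannot arise since $\mathcal{O}$ is nonempty.)

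The main subtlety is part (b): identifying $\mathcal{O}^c$ with the locus of non-maximal orbit dimension is not formal but genuinely uses the uniqueness of the open orbit together with the measure-theoretic fact that any open orbit has positive Lebesgue measure, hence must coincide with $\mathcal{O}$. Once this identification is in place, the polynomial description follows by standard rank-condition arguments, and the degree bound $2d$ is immediate from the construction.
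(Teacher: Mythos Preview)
Your proof is correct. Part (b) follows exactly the same line as the paper: identify $\mathcal{O}$ with the locus where the infinitesimal orbit map $\mathfrak{h}\ni X\mapsto X^T\xi$ has rank $d$, and then take the sum of squares of the $d\times d$ minors.

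Part (a), however, takes a genuinely different route. The paper derives (a) as an immediate byproduct of the rank characterization already set up for (b): since $M_{r\xi}=rM_\xi$ for $r>0$, the rank of the infinitesimal map is scale-invariant, so $\xi\in\mathcal{O}\Leftrightarrow r\xi\in\mathcal{O}$ in one line. You instead argue directly from the orbit structure and the full-measure property: the intersection $\mathcal{O}\cap\lambda^{-1}\mathcal{O}$ is nonempty, which produces a single $h\in H$ acting as multiplication by $\lambda$ on some reference point, and then transitivity of $H$ on $\mathcal{O}$ transports this to every $\xi\in\mathcal{O}$. Your argument is more elementary in that it never touches the Lie algebra, and it in fact proves the slightly stronger statement that scaling by any fixed $\lambda>0$ is realized by the action of a single group element on all of $\mathcal{O}$. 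The paper's approach is more economical overall, since the rank machinery has to be developed for (b) anyway and then (a) is free.
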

\begin{prf}
 We shortly sketch the arguments for the convenience of the reader, and refer to \cite{Fu98} for more information:
Since by assumption the open dual orbit is unique, we have that $\xi \in \mathcal{O}$ holds precisely if $H^T \xi$ is an open set. Let $\mathfrak{h}$ denote the Lie algebra of $H$, then by basic results of differential geometry (such as Sard's theorem) it follows that $H^T \xi$ is open iff the map 
\[
 M_\xi : \mathfrak{h} \ni X \mapsto X^T \xi \in \mathbb{R}^d
\] has maximal rank. In particular, since $M_{r \cdot \xi} = r M_\xi$, it follows that $\mathcal{O}$ is closed under multiplication with positive scalars. 

Moreover, the rank condition holds if and only if at least one $d \times d$-subdeterminant of $M_\xi$ is nonzero. Thus summing over the squares of these subdeterminants yields a polynomial $P$ with the desired properties. 
\end{prf}

For reasons that will become apparent in Lemma \ref{lem:generate_van_moments} below, one is generally interested in finding a polynomial $P$ with the properties described in the lemma, and with rather low degree.  
We remark that the procedure to compute $P$, as sketched in the proof, may not be the simplest way to determine such a polynomial, nor will it necessarily give a polynomial of smallest possible degree. 
In most concrete cases $P$ can be easily guessed, as soon as the open dual orbit has been computed; see the examples in Section \ref{sect:exs}. 

We next define an important auxiliary function $A : \mathcal{O} \to \mathbb{R}^+$: 
Given any point $\xi \in \mathcal{O}$, let ${\rm
dist}(\xi,\mathcal{O}^c)$ denote the minimal euclidean distance of $\xi$ to
$\mathcal{O}^c$. We then let
\[
 A(\xi) = \min \left( \frac{{\rm dist}(\xi,\mathcal{O}^c)}{1+\sqrt{|\xi|^2-{\rm dist}(\xi,\mathcal{O}^c)^2}},
 \frac{1}{1+|\xi|} \right)~. 
\]
By definition, $A$ is a continuous function with $A(\cdot) \le 1$. In fact $A \in C_0(\mathcal{O})$, the space of functions on $\mathcal{O}$ vanishing at infinity. 

We let  $h^{-T}$ the transpose inverse of
$h \in H$, and use these notations to define the following central notion: 
\begin{definition} \label{defn:mod_embedded} Let $w: H \to \mathbb{R}^+$ denote a weight
function, $s \ge 0$, and $1 \le q < \infty$. $\mathcal{O}$ is called {\bf
$(s,q,w)$-temperately embedded (with index $\ell \in \mathbb{N}$)} if
the following two conditions hold, for a fixed $\xi_0 \in
\mathcal{O}$:
\begin{enumerate}
\item[(i)] The function $H \ni h \mapsto |{\rm det}(h)|^{1/2-1/q} (1+\|
h \|)^{s+d+1} w(h) A(h^T \xi_0)^\ell$ is in ${\rm L}^q(H)$.
\item[(ii)] The function $H \ni h \mapsto |{\rm det}(h)|^{-1/2-1/q} (1+\| h
\|)^{s+d+1} w(h) A(h^{-T} \xi_0)^\ell$ is in ${\rm L}^1(H)$.
\end{enumerate}
If $\mathcal{O}$ is $(s,q,w)$-temperately embedded for all $1 \le q <
\infty$ and $s \ge 0$, (with an index possibly depending on $s$ and $q$), the orbit
$\mathcal{O}$ is called {\bf $w$-temperately embedded}.
\end{definition}

Submultiplicativity of $w$, $|{\rm det}|$ and $\| \cdot \|$ imply that the conditions (i) and (ii) are independent
of the choice of $\xi_0 \in \mathcal{O}$. Furthermore, an obvious but useful
observation is that whenever the weight $w'$ is dominated by the
weight $w$, then every $w$-temperately embedded dual orbit is also
$w'$-temperately embedded. Finally, note that since $A\le 1$, it is possible to first fulfill conditions (i) and (ii) separately, with different indices $\ell_1,\ell_2$, and then take $\ell = \max(\ell_1,\ell_2)$ to guarantee them simultaneously. 

The aim of this section is to derive estimates of the form
\[
\| \mathcal{W}_\psi f \|_{{\rm L}^{p,q}_v(G)} \preceq
|\widehat{f}|_{r,m} |\widehat{\psi}|_{r,m}~,
\] whenever the orbit is temperately embedded, and $f$ and $\psi$ fulfill suitable vanishing moment conditions. These are
clarified in the next definition:
\begin{definition} \label{defn:van_mom}
 Let $r \in \mathbb{N}$ be given.
 $f \in {\rm L}^1(\mathbb{R}^d)$ {\bf has vanishing moments in $\mathcal{O}^c$ of order $r$} if all
 distributional derivatives $\partial^\alpha \widehat{f}$ with $|\alpha|\le r$ are
 continuous functions, and all derivatives of degree $|\alpha|<r$ are identically vanishing on $\mathcal{O}^c$.
\end{definition}

We will need several lemmas. The first one motivates the
introduction of our auxiliary function $A$ from above.
\begin{lemma} \label{lem:decay_est_pd}
Assume that $f$ has vanishing moments in $\mathcal{O}^c$ of order $r$, and that $m \ge r$. Then there exists
a constant $C>0$, dependent only on $r$ and $d$, such that for all $\xi \in
\mathbb{R}^d$:
\[
\left|\widehat{f}(\xi) \right| \le  C |\widehat{f}|_{r,m} A(\xi)^{r}
\]
\end{lemma}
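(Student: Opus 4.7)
The plan is to estimate $\widehat{f}(\xi)$ for $\xi \in \mathcal{O}$ by a Taylor expansion around a nearest point $\xi_0 \in \mathcal{O}^c$ and separately by the direct Schwartz decay bound; the minimum of the two estimates will recover exactly $A(\xi)^r$. The case $\xi \in \mathcal{O}^c$ is trivial (both sides vanish, since $\mathrm{dist}(\xi,\mathcal{O}^c)=0$ and $\widehat{f}|_{\mathcal{O}^c} \equiv 0$ when $r \ge 1$, while $r=0$ is immediate from the definition of $|\widehat{f}|_{0,m}$), so I focus on $\xi \in \mathcal{O}$ and $r \ge 1$.

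Fix such $\xi$, set $d(\xi) = \mathrm{dist}(\xi,\mathcal{O}^c)$, and pick a minimiser $\xi_0 \in \mathcal{O}^c$; one exists since $\mathcal{O}^c$ is closed and nonempty ($0 \in \mathcal{O}^c$). The geometric key step is to exploit Lemma \ref{lem:prop_open_orbit}(a): $\mathcal{O}$, and hence $\mathcal{O}^c$, is stable under multiplication by positive scalars, so the entire ray $\mathbb{R}^+ \xi_0$ lies in $\mathcal{O}^c$. Minimality of $|\xi - \xi_0|$ over this ray forces $\langle \xi - \xi_0,\xi_0\rangle = 0$, whence Pythagoras gives $|\xi_0|^2 = |\xi|^2 - d(\xi)^2$. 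The same orthogonality yields, for all $t \in [0,1]$,
\[
|(1-t)\xi_0 + t\xi|^2 = |\xi_0|^2 + t^2 |\xi - \xi_0|^2 \ge |\xi|^2 - d(\xi)^2.
\]
I would then apply Taylor's formula with integral remainder at $\xi_0$: since $\partial^\alpha \widehat{f}(\xi_0) = 0$ for $|\alpha|<r$ and the $r$-th distributional derivatives are continuous,
\[
\widehat{f}(\xi) = r \sum_{|\alpha|=r} \frac{(\xi-\xi_0)^\alpha}{\alpha!} \int_0^1 (1-t)^{r-1} \partial^\alpha \widehat{f}\bigl(\xi_0 + t(\xi-\xi_0)\bigr)\, dt.
\]
Bounding each partial derivative by $|\widehat{f}|_{r,m}(1+|\xi_0+t(\xi-\xi_0)|)^{-m}$, inserting the displayed lower bound together with $m \ge r$, and using $|\xi-\xi_0|^r = d(\xi)^r$ produces
\[
|\widehat{f}(\xi)| \le C_{r,d}\, |\widehat{f}|_{r,m} \left( \frac{d(\xi)}{1+\sqrt{|\xi|^2-d(\xi)^2}} \right)^{\!r}.
\]
Combining with the direct Schwartz estimate $|\widehat{f}(\xi)| \le |\widehat{f}|_{r,m}(1+|\xi|)^{-r}$ and the identity $\min(a,b)^r = \min(a^r,b^r)$ for $a,b\ge 0$ completes the argument.

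The main obstacle I expect is the orthogonality step: one has to notice that the conic structure of $\mathcal{O}^c$ inherited from Lemma \ref{lem:prop_open_orbit}(a) is precisely what turns the otherwise unmotivated quantity $\sqrt{|\xi|^2 - d(\xi)^2}$ in the definition of $A$ into the natural geometric quantity $|\xi_0|$, making the Taylor remainder sharp enough to reproduce the first branch of the minimum defining $A$, while the second branch comes for free from the Schwartz seminorm.
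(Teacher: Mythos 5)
Your proposal is correct and follows essentially the same route as the paper: the nearest point $\xi'\in\mathcal{O}^c$, orthogonality of $\xi-\xi'$ and $\xi'$ via the ray-stability of $\mathcal{O}^c$ from Lemma \ref{lem:prop_open_orbit}(a), Pythagoras to identify $|\xi'|=\sqrt{|\xi|^2-\mathrm{dist}(\xi,\mathcal{O}^c)^2}$, a Taylor expansion of order $r-1$ at $\xi'$ with vanishing polynomial part, and the minimum with the direct Schwartz bound $(1+|\xi|)^{-r}$. The only cosmetic difference is your use of the integral form of the remainder where the paper uses the Lagrange form, plus your (correct but routine) explicit treatment of $\xi\in\mathcal{O}^c$.
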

\begin{proof}
Let $\xi \in \mathcal{O}$ be arbitrary, and $\xi' \in \mathcal{O}^c$
with $|\xi-\xi'| = {\rm dist}(\xi,\mathcal{O}^c)$; the existence of $\xi'$ is guaranteed by the usual compactness arguments. Since $\mathbb{R}^+ \cdot \xi' \subset \mathcal{O}^c$ by Lemma \ref{lem:prop_open_orbit}(a), the minimizer property of $\xi'$ entails that $\xi'$ and $\xi-\xi'$ are orthogonal, and the Pythagorean Theorem yields
\[
 |\xi'| = \sqrt{|\xi|^2 - {\rm dist}(\xi,\mathcal{O}^c)^2}
\]

Now let $\varphi :
t\mapsto \widehat{f}(\xi'+ t (\xi-\xi'))$, and consider the Taylor
expansion of $\varphi$ of order $r-1$ around $0$. By assumption, the
Taylor polynomial is zero, and thus Taylor's formula yields
\[ \widehat{f}(\xi) = \frac{\varphi^{(r)}(t)}{r!} \] for some suitable $t$ between 
$0$ and $1$. 

By induction over $r$ one obtains real constants $c_\alpha$, for all multiindices $\alpha$ with $|\alpha|=r$, such that
\[ 
\varphi^{(r)}(t) = \sum_{|\alpha|=r} c_\alpha (\xi-\xi')^\alpha (\partial^\alpha \widehat{f})(\xi'+t(\xi-\xi')) ~~~,
\] recall the notation $v^\alpha = v_1^{\alpha_1} \cdot \dots \cdot v_d^{\alpha_d}$. But this implies
\begin{eqnarray*}
 |\varphi^{(r)}(t)| & \le &  C \max_{|\alpha|=r} |\xi-\xi'|^r \left|   (\partial^\alpha \widehat{f})(\xi'+t(\xi-\xi')) \right| \\ 
& \le & C' \left| \widehat{f} \right|_{r,m} |\xi-\xi'|^r (1+|\xi'+t(\xi-\xi')|)^{-m}~~. 
\end{eqnarray*}
By the Pythagorean Theorem, $|\xi'| \le |\xi'+t(\xi-\xi')|$, and thus finally
\[
 |\widehat{f}(\xi)| \le \frac{C'}{r!}  |\widehat{f}|_{r,m} \frac{ {\rm dist}(\xi,\mathcal{O}^c)^r}{\left(1+ \sqrt{|\xi|^2 - {\rm dist}(\xi,\mathcal{O}^c)^2} \right)^m}~.
\]

On the other hand,
\[
 |\widehat{f}(\xi)| \le  |\widehat{f}|_{r,m} (1+|\xi|)^{-r}
\] is clear by definition of the Schwartz norm. 
Hence, since $m \ge r$, the lemma is proved. 
\end{proof}

In the following lemma, we use the convention $d/\infty =0$.
\begin{lemma} \label{lem:lp_vs_l1} Let $1 \le p \le \infty$. Pick an integer $t>0$ with $t >s+d/p$.
Then there exists a constant $C>0$, such that, for all $g \in {\rm L}^1(\mathbb{R}^d)$,
\[
\| g \|_{{\rm L}^p_s} \le C \max_{|\alpha| \le t} \| \partial^\alpha \widehat{g}
\|_1~.
\]
\end{lemma}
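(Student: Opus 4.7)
The plan is to exploit the standard duality between smoothness in frequency and decay in space: if $\partial^\alpha \widehat g \in L^1$, then its inverse Fourier transform, which is (up to a constant factor) $x^\alpha g(x)$, must be bounded by $\|\partial^\alpha \widehat g\|_1$. Summing these bounds for $|\alpha| \le t$ produces pointwise polynomial decay of $g$, which I then integrate against the weight $(1+|x|)^s$ to conclude.

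In more detail, I would first fix a multiindex $\alpha$ with $|\alpha|\le t$. Since $g \in L^1$, the identity $\mathcal{F}((-2\pi i x)^\alpha g)(\xi) = \partial^\alpha \widehat g(\xi)$ holds in the sense of tempered distributions, and the assumption that the right-hand side lies in $L^1$ (which is implicit in the hypothesis that the max is finite) together with Fourier inversion forces $x^\alpha g$ to agree almost everywhere with a continuous bounded function, with
\[
(2\pi)^{|\alpha|} |x^\alpha g(x)| \le \|\partial^\alpha \widehat g\|_1 \qquad\text{a.e.}
\]
Summing over the multiindices with $|\alpha|\le t$ (or comparing $(1+|x|)^t$ to $\sum_{|\alpha|\le t} |x^\alpha|$ up to a dimensional constant) gives a pointwise estimate
\[
(1+|x|)^t |g(x)| \le C_1 \max_{|\alpha|\le t} \|\partial^\alpha \widehat g\|_1 \qquad\text{a.e.}
\]

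From here the conclusion is just a H\"older-type computation: writing $(1+|x|)^s|g(x)| = (1+|x|)^{s-t}\cdot (1+|x|)^t |g(x)|$ and taking the $L^p$-norm yields
\[
\|g\|_{L^p_s} \le C_1 \left(\max_{|\alpha|\le t} \|\partial^\alpha \widehat g\|_1\right) \cdot \|(1+|x|)^{s-t}\|_{L^p(\mathbb{R}^d)}.
\]
The last factor is finite precisely when $(t-s)p > d$ for $p<\infty$ (and when $t\ge s$ for $p=\infty$), and this is guaranteed by the hypothesis $t > s + d/p$ (using the convention $d/\infty=0$). Setting $C$ to absorb $C_1$ and $\|(1+|x|)^{s-t}\|_{L^p}$ finishes the proof.

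The only mild subtlety is the justification that one may identify $g$ with a continuous bounded function so that the pointwise estimate is meaningful; this follows because $\partial^\alpha \widehat g \in L^1$ for $\alpha = 0$ already forces $g$ (a priori only in $L^1$) to have a continuous representative via Fourier inversion, and I would use this representative throughout. Beyond that, the argument is routine, so I do not anticipate a serious obstacle.
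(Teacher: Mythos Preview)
Your proposal is correct and follows essentially the same route as the paper's proof: both use the Fourier identity $\mathcal{F}^{-1}(\partial^\alpha \widehat g)(x) = (-2\pi i x)^\alpha g(x)$ together with $\|\mathcal{F}^{-1} h\|_\infty \le \|h\|_1$ to obtain the pointwise decay $|g(x)| \preceq (1+|x|)^{-t}\max_{|\alpha|\le t}\|\partial^\alpha \widehat g\|_1$, and then conclude by integrability of $(1+|x|)^{(s-t)p}$ under the hypothesis $t>s+d/p$. Your write-up is in fact slightly more careful than the paper's about the distributional justification and the case $p=\infty$.
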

\begin{proof}
The estimate is trivial if one of the $\partial^\alpha \widehat{g}$
is not integrable. If all of these functions are integrable, we
obtain that $\mathcal{F}^{-1}(\partial^\alpha \widehat{g})(x) = (-2
\pi i x)^{\alpha} g (x)$, and $g$ is bounded. This implies
\[
|g(x)| \preceq (1+|x|)^{-t}  \max_{|\alpha| \le t} \|
\partial^\alpha \widehat{g} \|_1~.
\] and the estimate follows from integrability of $(1+|x|)^{sp-tp}$ since
$(t-s)p \ge t-s >d$.
\end{proof}

For the following, we remind the reader of $D_h \widehat{g}: \xi \mapsto
\widehat{g}(h^T \xi)$. The lemma exhibits the role of the auxiliary functions: They serve as {\em envelopes} for $\widehat{\psi}$ and its derivatives, as soon as $\psi$ exhibits sufficient smoothness and vanishing moments. This role will become particularly apparent in the proof of Theorem \ref{thm:central} below. 
\begin{lemma} \label{lem:decay_est_prod}
Assume that $f,\psi \in {\rm L}^1(\mathbb{R}^d)$ have vanishing
moments of order $r$ in $\mathcal{O}^c$, and fulfill
$|\widehat{f}|_{r,r-|\alpha|} < \infty, |\widehat{\psi}|_{r,r-|\alpha|} < \infty$, for some
multiindex $\alpha$ with $|\alpha| \le r$. Then there exists a
constant $C>0$, independent of $f$ and $\psi$, such that
\[
 | \partial^\alpha (\widehat{f} \cdot D_h \widehat{\psi})(\xi)| \le
 C |\widehat{f}|_{r,r-|\alpha|} |\widehat{\psi}|_{r,r-|\alpha|}  
(1+\| h \|_\infty)^{|\alpha|}
 A(\xi)^{r-|\alpha|} A(h^T \xi)^{r-|\alpha|}
\]
\end{lemma}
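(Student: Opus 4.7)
The plan is to combine the Leibniz rule (for the product $\widehat{f} \cdot D_h\widehat{\psi}$), the chain rule (for $D_h\widehat{\psi}(\xi)=\widehat{\psi}(h^T\xi)$), and Lemma \ref{lem:decay_est_pd} applied to the derivatives $\partial^\beta\widehat{f}$ and $\partial^{\gamma'}\widehat{\psi}$. First I would expand
\[
\partial^\alpha(\widehat{f}\cdot D_h\widehat{\psi})(\xi)=\sum_{\beta\le\alpha}\binom{\alpha}{\beta}\,\partial^\beta\widehat{f}(\xi)\cdot\partial^{\alpha-\beta}(D_h\widehat{\psi})(\xi),
\]
and then observe that iterated application of the chain rule to $D_h\widehat{\psi}$ yields
\[
\partial^\gamma(D_h\widehat{\psi})(\xi)=\sum_{|\gamma'|=|\gamma|}c_{\gamma,\gamma'}(h)\,(\partial^{\gamma'}\widehat{\psi})(h^T\xi),
\]
where each $c_{\gamma,\gamma'}(h)$ is a polynomial of degree $|\gamma|$ in the entries of $h^T$, so that $|c_{\gamma,\gamma'}(h)|\preceq\|h\|_\infty^{|\gamma|}$ by equivalence of matrix norms. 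Consequently
\[
|\partial^\gamma(D_h\widehat{\psi})(\xi)|\preceq\|h\|_\infty^{|\gamma|}\max_{|\gamma'|=|\gamma|}\bigl|(\partial^{\gamma'}\widehat{\psi})(h^T\xi)\bigr|.
\]

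Next I would apply Lemma \ref{lem:decay_est_pd} to the pointwise factors $\partial^\beta\widehat{f}$ and $\partial^{\gamma'}\widehat{\psi}$. The key observation is that $\partial^\beta\widehat{f}$ inherits vanishing moments of order $r-|\beta|\ge r-|\alpha|$ in $\mathcal{O}^c$, since any derivative of $\partial^\beta\widehat{f}$ of order $<r-|\beta|$ is a derivative of $\widehat{f}$ of order $<r$, hence vanishes on $\mathcal{O}^c$ by Definition \ref{defn:van_mom}. The same holds for $\partial^{\gamma'}\widehat{\psi}$. To get the correct form of the final estimate, I would apply the lemma with the (possibly weaker) vanishing order $r-|\alpha|$ and the matching parameter $m=r-|\alpha|$, yielding
\[
|\partial^\beta\widehat{f}(\xi)|\le C\,|\partial^\beta\widehat{f}|_{r-|\alpha|,\,r-|\alpha|}\,A(\xi)^{r-|\alpha|}\le C\,|\widehat{f}|_{r,\,r-|\alpha|}\,A(\xi)^{r-|\alpha|},
\]
where the last inequality uses $|\eta|+|\beta|\le(r-|\alpha|)+|\alpha|=r$ to bound the auxiliary Schwartz norm by $|\widehat{f}|_{r,r-|\alpha|}$. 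The analogous estimate for $\partial^{\gamma'}\widehat{\psi}$ evaluated at $h^T\xi$ produces the factor $A(h^T\xi)^{r-|\alpha|}$ and the norm $|\widehat{\psi}|_{r,r-|\alpha|}$.

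Finally, substituting both pointwise bounds into the Leibniz expansion, I obtain
\[
|\partial^\alpha(\widehat{f}\cdot D_h\widehat{\psi})(\xi)|\preceq|\widehat{f}|_{r,r-|\alpha|}|\widehat{\psi}|_{r,r-|\alpha|}A(\xi)^{r-|\alpha|}A(h^T\xi)^{r-|\alpha|}\sum_{\beta\le\alpha}\binom{\alpha}{\beta}\|h\|_\infty^{|\alpha-\beta|}.
\]
Factoring the sum coordinatewise gives $\sum_{\beta\le\alpha}\binom{\alpha}{\beta}\|h\|_\infty^{|\alpha-\beta|}=\prod_{i=1}^d(1+\|h\|_\infty)^{\alpha_i}=(1+\|h\|_\infty)^{|\alpha|}$, which is exactly the claimed $h$-dependence.

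The only real obstacle is the apparent tension between the exponent $r-|\alpha|$ in the target bound and the natural vanishing order $r-|\beta|\ge r-|\alpha|$ of $\partial^\beta\widehat{f}$: a naive application of Lemma \ref{lem:decay_est_pd} with the sharp vanishing order $r-|\beta|$ would require a Schwartz norm $|\widehat{f}|_{r,r-|\beta|}$ that is strictly stronger than the one given in the hypothesis. The resolution is the monotonicity $A(\cdot)\le 1$, which allows one to trade the sharp exponent $r-|\beta|$ for the uniform exponent $r-|\alpha|$ and correspondingly invoke the lemma with the weaker (but available) Schwartz norm $|\widehat{f}|_{r,r-|\alpha|}$.
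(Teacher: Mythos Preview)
Your proof is correct and follows essentially the same route as the paper: Leibniz rule, chain rule for $D_h\widehat{\psi}$, then Lemma~\ref{lem:decay_est_pd} applied to $\partial^\beta\widehat{f}$ and $\partial^{\gamma'}\widehat{\psi}$ with the reduced vanishing-moment order $r-|\alpha|$. Your version is in fact slightly more careful than the paper's in two places: you note that the chain rule for a linear change of variables produces only derivatives with $|\gamma'|=|\gamma|$ (the paper writes $|\gamma'|\le|\gamma|$), and you track the $h$-dependence via the exact binomial identity $\sum_{\beta\le\alpha}\binom{\alpha}{\beta}\|h\|_\infty^{|\alpha-\beta|}=(1+\|h\|_\infty)^{|\alpha|}$ rather than bounding each summand separately.
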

\begin{proof}
Using the Leibniz formula, we obtain
\[
\left|\partial^\alpha (\widehat{f} \cdot D_h \widehat{\psi}) (\xi)
\right| \le \sum_{\gamma+\beta =\alpha}  \frac{\alpha!}{\beta! \gamma!}  \left|\left(
\partial^\beta \widehat{f} \right)(\xi) \right|~ \left| \left(
\partial^\gamma (D_h \widehat{\psi}) \right)(\xi) \right| ~,
\] where we used the notation $\alpha! = \prod_i \alpha_i !$. 
We next use the chain rule to obtain inductively for any multiindex $\gamma$
\[
 |\partial^\gamma (D_h \widehat{\psi})(\xi)| \preceq (1+\|h\|_\infty)^{|\gamma|} \sum_{|\gamma'|\le |\gamma|}
 \left|\left(\partial^{\gamma'} \widehat{\psi} \right) (h^T \xi)\right|~. 
\]
Thus we obtain
\[
 \left|\partial^\alpha (\widehat{f} \cdot D_h \widehat{\psi}) (\xi)
\right| \preceq (1+\|h \|_\infty)^{|\alpha|} \sum_{|\beta|, |\gamma| \le |\alpha|}  
 \left|\left(
\partial^\beta \widehat{f} \right)(\xi) \right| \cdot \left| \left( \partial^\gamma \widehat{\psi} \right) (h^T \xi) \right|~.
\]

For any $\gamma,\beta$ occurring in the sum, $(\partial^\beta
\widehat{f})^\vee$ and $(\partial^\gamma \widehat{\psi})^\vee$  have vanishing moments in $\mathcal{O}^c$ of order $ \ge r-|\alpha|$. Hence, Lemma \ref{lem:decay_est_pd}
applied to both factors yields the desired estimate. 
\end{proof}

We now come to one of the central estimates of this paper. Note that, when applying this lemma to weights of the type $v(x,h) = (1+|x|+\| h \|_\infty)^s w(h)$, the weight $w$ in the theorem needs to be replaced by $w_s$. 
\begin{theorem} \label{thm:central}
Let $1 \le p \le \infty$ and $1 \le q < \infty$. Let $v$ be a weight on $G$ satisfying $v(x,h) \le (1+|x|)^s w(h)$, for a suitable weight $w$ on $H$.  
Assume that $\mathcal{O}$ is $(s,q,w)$-temperately embedded with index
$\ell$. Let $f, \psi \in {\rm L}^1(\mathbb{R}^d)$ have vanishing
moments in $\mathcal{O}^c$ of order $t > \ell+s+1+d/p$. Then there
exist constants $C_{p,q} >0$, independent
of $f,\psi$, such that
\[
\| W_{\psi} f \|_{{\rm L}^{p,q}_v} \le C_{p,q} |\widehat{f}|_{t,t}
|\widehat{\psi}|_{t,t}~.
\]
\end{theorem}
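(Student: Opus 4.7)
My plan is to reduce the $L^{p,q}_v$-norm estimate to $L^1$-estimates in the frequency domain via Lemma \ref{lem:lp_vs_l1}, apply the pointwise bound from Lemma \ref{lem:decay_est_prod}, and then close the resulting iterated integral using Minkowski's integral inequality together with the two conditions in the definition of temperate embedding.

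Concretely, I would first rewrite $\mathcal{W}_\psi f(x,h) = |\det(h)|^{1/2}(\widehat{f}\cdot D_h\overline{\widehat{\psi}})^\vee(x)$ and, using $v(x,h)\le(1+|x|)^s w(h)$, bound
\[
\|\mathcal{W}_\psi f\|_{L^{p,q}_v}^q \le \int_H w(h)^q|\det(h)|^{q/2-1}\|(\widehat{f}\cdot D_h\overline{\widehat{\psi}})^\vee\|_{L^p_s}^q\, dh.
\]
Choosing $t_0$ to be the smallest integer strictly greater than $s+d/p$, the hypothesis $t>\ell+s+1+d/p$ guarantees $t-t_0\ge\ell$ and $t_0\le s+d+1$. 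Lemma \ref{lem:lp_vs_l1} reduces the inner norm to $\max_{|\alpha|\le t_0}\|\partial^\alpha(\widehat{f}\cdot D_h\overline{\widehat{\psi}})\|_1$, and Lemma \ref{lem:decay_est_prod} delivers the pointwise estimate $|\partial^\alpha(\widehat{f}\cdot D_h\overline{\widehat{\psi}})(\xi)|\preceq |\widehat{f}|_{t,t}|\widehat{\psi}|_{t,t}(1+\|h\|_\infty)^{|\alpha|}A(\xi)^\sigma A(h^T\xi)^\sigma$, with $\sigma=t-|\alpha|\ge\ell$.

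What remains is to control the weighted $L^q(H)$-norm of $(1+\|h\|_\infty)^{|\alpha|}\int A(\xi)^\sigma A(h^T\xi)^\sigma\, d\xi$, multiplied by $w(h)|\det(h)|^{1/2-1/q}$. Setting $W(h)=w(h)|\det(h)|^{1/2-1/q}(1+\|h\|_\infty)^{|\alpha|}$ and applying Minkowski's integral inequality, the task reduces to bounding
\[
\int_\mathcal{O} A(\xi)^\sigma\Bigl(\int_H W(h)^q A(h^T\xi)^{\sigma q}\, dh\Bigr)^{1/q} d\xi.
\]
For each $\xi=h_\xi^T\xi_0\in\mathcal{O}$, the identity $h^T\xi=(h_\xi h)^T\xi_0$ together with left invariance of Haar measure on $H$ and the submultiplicativity of each factor of $W$ bound the inner $H$-integral by $W(h_\xi^{-1})$ times the $L^q(H)$-norm appearing in condition (i) of temperate embedding (which is finite since $\sigma\ge\ell$ and $|\alpha|\le s+d+1$).

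Finally, I would transfer the remaining integral $\int_\mathcal{O} A(\xi)^\sigma W(h_\xi^{-1})\, d\xi$ to $H$ via the substitution $\xi=h^T\xi_0$. Since Lebesgue measure on $\mathcal{O}$ is relatively $H$-invariant of character $|\det|$, its pullback to $H$ (modulo the compact stabilizer $H_{\xi_0}$) produces the Jacobian $\Delta_G(0,h)^{-1}=|\det(h)|\Delta_H(h)^{-1}$; the further substitution $h\mapsto h^{-1}$, combined with the estimate $A^\sigma\le A^\ell$, recasts the integrand in the form appearing in condition (ii) of temperate embedding, which supplies the required finiteness. The main obstacle I expect is the careful bookkeeping of the modular and determinant factors through these two changes of variable so that the exponents match conditions (i) and (ii) exactly; this bookkeeping also clarifies why the definition of temperate embedding demands two separate integrability conditions, namely (i) to control the inner $H$-integral after Minkowski, and (ii) to control the outer orbit integration.
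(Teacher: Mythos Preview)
Your proposal is correct and follows essentially the same route as the paper's proof: rewrite the wavelet transform on the Fourier side, reduce the $L^p_s$-norm via Lemma~\ref{lem:lp_vs_l1}, apply the pointwise envelope from Lemma~\ref{lem:decay_est_prod}, swap the order of integration via Minkowski, and then use the orbit parametrization $\xi = h^T\xi_0$ together with left invariance, submultiplicativity, and the inversion $h\mapsto h^{-1}$ to land exactly on conditions (i) and (ii) of Definition~\ref{defn:mod_embedded}. Your introduction of the integer $t_0$ and the explicit check that $t-t_0\ge\ell$ and $t_0\le s+d+1$ is in fact slightly cleaner than the paper's handling of the bound $|\alpha|\le s+1+d/p$, but the substance is identical.
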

\begin{proof}
 The Haar measure on $G$ is
given by $d(x,h) = dx \frac{dh}{|\det(h)|}$. Our conventions
regarding the operator $D_h$ allow to rewrite
\[
\mathcal{W}_{\psi} f (x,h) = \left( |{\rm det}(h)|^{1/2} \widehat{f}
\cdot (D_h \overline{\widehat{\psi}}) \right)^{\vee}(x)~.
\]
Thus, by our assumptions on the weights $v,w$:
\[
\left\| \mathcal{W}_\psi f \right\|_{{\rm L}^{p,q}_v}^q \le \int_{H}
|{\rm det}(h)|^{q/2} \left\| \left( \widehat{f} \cdot (D_h
\overline{\widehat{\psi}}) \right)^\vee \right\|_{{\rm L}^p_s}^q w(h)^q
\frac{dh}{|{\rm det}(h)|}~.
\] By Lemma \ref{lem:lp_vs_l1}, we have
\[
 \left\| \left( \widehat{f} \cdot (D_h
\overline{\widehat{\psi}}) \right)^\vee \right\|_{L^p_s} \preceq
\sum_{|\alpha| \le s+1+d/p} \left\| \partial^\alpha \left( \widehat{f}
\cdot \left( D_h \overline{\widehat{\psi}} \right) \right)
\right\|_1~,
\] with a constant depending on $p$, but not on $\psi$ and $f$. Thus, by the ${\rm L}^q$-triangle inequality
\[
\left\| \mathcal{W}_\psi f \right\|_{{\rm L}^{p,q}_v} \preceq
\sum_{|\alpha| \le s+1+d/p} \left( \int_H |{\rm det}(h)|^{q/2}\left\|
\partial^\alpha \left( \widehat{f} \cdot \left( D_h
\overline{\widehat{\psi}} \right) \right) \right\|_1^q w(h)^q
\frac{dh}{|{\rm det}(h)|} \right)^{1/q}~,
\] and we will estimate the integral for each $|\alpha| \le s+1+d/p$  independently. By Lemma
\ref{lem:decay_est_prod}, we find
\begin{eqnarray*}
\lefteqn{ \int_H |{\rm det}(h)|^{q/2}\left\|
\partial^\alpha \left( \widehat{f} \cdot \left( D_h
\overline{\widehat{\psi}} \right) \right) \right\|_1^q w(h)^q
\frac{dh}{|{\rm det}(h)|} \preceq} \\ & \preceq & \int_H \left(
\int_{\mathbb{R}^d} |\widehat{f}|_{t,t} |\widehat{\psi}|_{t,t} |{\rm
det}(h)|^{1/2} w(h) (1+\| h \|)^{|\alpha|}
 A(\xi)^{t-|\alpha|} A(h^T \xi)^{t-|\alpha|} d\xi \right)^q
 \frac{dh}{|{\rm det}(h)|}~.
\end{eqnarray*}
Thus, by Minkowski's inequality for integrals,
\begin{eqnarray*}
\lefteqn{ \left(  \int_H |{\rm det}(h)|^{q/2}\left\|
\partial^\alpha \left( \widehat{f} \cdot \left( D_h
\overline{\widehat{\psi}} \right) \right) \right\|_1^q w(h)^q
\frac{dh}{|{\rm det}(h)|} \right)^{1/q} \preceq}\\ & \preceq &
|\widehat{f}|_{t,t} |\widehat{\psi}|_{t,t} \int_{\mathbb{R}^d} \left(
\int_{H} |{\rm det}(h)|^{q/2} w(h)^q  (1+\| h
\|_\infty)^{q|\alpha|} A(\xi)^{q(t-|\alpha|)} A(h^T \xi)^{q(t-|\alpha|)}
\frac{dh}{|{\rm det}(h)|} \right)^{1/q} d\xi ~,
\end{eqnarray*}
and it remains to prove that the integral is finite, for all
$|\alpha| \le s+1+d/p$. For this purpose observe that $\mathcal{O}
\subset \mathbb{R}^d$ is of full measure, hence it suffices to
integrate over $\mathcal{O}$. We fix $\xi_0 \in \mathcal{O}$, then
the stabilizer in $H$ associated to $\xi_0$ is compact. Let
$p_{\xi_0}: H \to \mathcal{O}$, $p_{\xi_0}(h) = h^T \xi_0$ denote
the canonical quotient map. Then the measure $\mu_{\mathcal{O}}$,
defined by $\mu_{\mathcal{O}}(A) = \mu_H(p_{\xi_0}^{-1}(A))$ is a
well-defined Radon measure on $\mathcal{O}$, which is
Lebesgue-absolutely continuous with Radon-Nikodym derivative
\[ \frac{d\mu_{\mathcal{O}}(h^T \xi_0)}{d\lambda(h^T \xi_0)} = c_0 \frac{\Delta_H(h)}{ |{\rm
det}(h)|} ~,\] for a positive constant $c_0$; see \cite{Fu96,FuDiss} for more
details.  Hence for any
Borel-measurable $F: \mathcal{O} \to \mathbb{R}^+$,
\[
 \int_{\mathcal{O}} F(\xi) d\xi = \frac{1}{c_0} \int_H F(h^T \xi_0) \frac{|{\rm
 det}(h)|}{
 |\Delta_H(h)|} dh~.
\]
Applying this to the above integrand yields, after the substitution
$\xi = h_1^T \xi_0$, that
\begin{eqnarray*}
\lefteqn{ \int_{\mathbb{R}^d} \left(\int_H  |{\rm det}(h)|^{q/2}
w(h)^q (1+\| h \|_\infty)^{q|\alpha|} A(\xi)^{q(t-|\alpha|)} A(h^T
\xi)^{q(t-|\alpha|)} \frac{dh}{|{\rm det}(h)|} \right)^{1/q} d\xi =}
\\ & = & \frac{1}{c_0} \int_H \left( \int_{H} |{\rm det}(h)|^{q/2}
w(h)^q (1+\| h \|_\infty)^{q|\alpha|}  A((h_1h)^T \xi_0)^{q(t-|\alpha|)}
\frac{dh}{|{\rm det}(h)|} \right)^{1/q} \\ & &  A(h_1^T
\xi_0)^{t-|\alpha|}\frac{|{\rm det}(h_1)|}{\Delta_H(h_1)} dh_1
\\ & = & \frac{1}{c_0} \int_H \left(  \int_{H} |{\rm
det}(h_1^{-1}h)|^{q/2} w(h_1^{-1}h)^q (1+\| h_1^{-1} h
\|)^{q|\alpha|} A(h^T \xi_0)^{q(t-|\alpha|)} \frac{dh}{|{\rm
det}(h_1^{-1}h)|} \right)^{1/q} \\ & &  A(h_1^T
\xi_0)^{t-|\alpha|}\frac{|{\rm det}(h_1)|}{\Delta_H(h_1)} dh_1 ~,
\end{eqnarray*}
where we used left-invariance of $\mu_H$. Now applying the
submultiplicativity of $|{\rm det}|,w$ and $\|\cdot \|_\infty$  yields
\begin{eqnarray*} ... & \preceq & \int_H \left(  \int_{H} |{\rm
det}(h)|^{q/2} w(h)^q (1+\| h \|_\infty)^{q|\alpha|} A(h^T
\xi_0)^{q(t-|\alpha|)} \frac{dh}{|{\rm det}(h)|} \right)^{1/q} \\ &
&  w(h_1^{-1}) |{\rm det}(h_1)|^{1/2+1/q} (1+\|
h_1^{-1}\|_\infty)^{|\alpha|}
A(h_1^T \xi_0)^{t-|\alpha|} \frac{dh_1}{\Delta_H(h_1)} \\
& = & \left(  \int_{H} |{\rm det}(h)|^{q/2} w(h)^q (1+\| h
\|_\infty)^{q|\alpha|} A(h^T \xi_0)^{q(t-|\alpha|)} \frac{dh}{|{\rm
det}(h)|} \right)^{1/q} \times \\ & & \int_H w(h_1) |{\rm
det}(h_1)|^{-1/q-1/2} (1+\| h_1\|_\infty)^{|\alpha|} A(h_1^{-T}
\xi_0)^{t-|\alpha|} dh_1~,
\end{eqnarray*}
where the last equation is obtained by pulling the inner integral
--which is now independent of  $h_1$-- out, and inverting the
integration variable $h_1$,  cancelling $\Delta_H(h_1)$ in the
process. Now the facts that $t-|\alpha| \ge t-s-d/p-1 > \ell$, $A
\le 1$ and $|\alpha| \le s+d+1$ imply that the integrals are finite
whenever $\mathcal{O}$ is $(s,q,w)$-temperately embedded with index
$\ell$. The proof is therefore finished.
\end{proof}

\section{Vanishing moment conditions and norm estimates for coorbit
spaces} \label{sect:coorbit}

We can now easily derive sufficient criteria for the existence of
wavelets with nice properties, and also for functions to be in a coorbit space. For the
explicit construction of nice wavelets, the following lemma is quite
useful. It will allow to adapt the following simple procedure to construct univariate functions with prescribed vanishing moments (i.e., wavelets) to higher dimensions: Pick a function $\rho$ with sufficient smoothness and decay (including the derivatives), and let $\psi = \rho^{(n)}$. Then $\psi$ will have vanishing moments of order $n$. 
\begin{lemma} \label{lem:generate_van_moments}
There exists a linear partial differential operator $D$ with
constant coefficients and degree $k \le 2d$ with the following property:
If $f$ and all its partial derivatives of order $\le sk$ is  in ${\rm
L}^1(\mathbb{R}^d)$, then $D^s f$ has vanishing moments of order
$s$ in $\mathcal{O}^c$.
\end{lemma}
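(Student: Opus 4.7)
The natural construction is to take $D$ to be the constant-coefficient linear partial differential operator whose Fourier symbol is precisely the polynomial $P = P_{\mathcal{O}}$ furnished by Lemma \ref{lem:prop_open_orbit}(b). Writing $P(\xi) = \sum_{|\gamma| \le k} c_\gamma \xi^\gamma$ with $k \le 2d$, I set
\[
 D \;=\; \sum_{|\gamma| \le k} c_\gamma \,(2\pi i)^{-|\gamma|}\, \partial^\gamma~,
\]
so that on the Fourier side $\widehat{D^s g} = P^s \cdot \widehat{g}$ for every $g \in \mathcal{S}'(\mathbb{R}^d)$. Then $D$ is a constant-coefficient partial differential operator of degree $k \le 2d$ as required, and $D^s$ has degree at most $sk$.

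Under the hypothesis, $D^s f$ is a linear combination of the partial derivatives $\partial^\gamma f$ for $|\gamma| \le sk$, each of which is in $\mathrm{L}^1(\mathbb{R}^d)$; hence $D^sf \in \mathrm{L}^1(\mathbb{R}^d)$ and $\widehat{D^sf}$ is a continuous bounded function that coincides with the product $P^s \cdot \widehat{f}$.

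The core algebraic observation is that $\partial^\beta(P^s)$ vanishes on $\mathcal{O}^c = \{P=0\}$ for every multiindex $\beta$ with $|\beta|<s$. This follows from the general Leibniz identity
\[
 \partial^\beta(P^s) \;=\; \sum_{\beta_1 + \cdots + \beta_s = \beta} \frac{\beta!}{\beta_1! \cdots \beta_s!} \prod_{j=1}^{s} \partial^{\beta_j} P~,
\]
combined with the pigeonhole remark that if $|\beta| < s$ at least one $\beta_j$ must equal $0$, so every summand contains $P$ as an undifferentiated factor. Applying Leibniz once more to the product $P^s \widehat{f}$ yields, at the level of tempered distributions,
\[
 \partial^\alpha \widehat{D^sf} \;=\; \sum_{\beta + \gamma = \alpha} \binom{\alpha}{\beta} (\partial^\beta P^s)(\partial^\gamma \widehat{f})~,
\]
and for $|\alpha| < s$ every index $\beta$ that occurs satisfies $|\beta|<s$, so every summand is a function vanishing identically on $\mathcal{O}^c$. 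The required continuity of these distributions for $|\alpha| \le s$ is read off from the same decomposition, the polynomial nature of the factors $\partial^\beta P^s$, and the regularity of $\widehat{f}$ and its distributional derivatives provided by the $\mathrm{L}^1$ hypothesis on the derivatives of $f$.

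The main conceptual obstacle is to produce a polynomial cutting out $\mathcal{O}^c$ with controlled degree, but this is already handled by Lemma \ref{lem:prop_open_orbit}(b); once $P$ is in hand the rest is the Leibniz rule together with the elementary vanishing-order statement for $\partial^\beta P^s$. The only step requiring any care is the verification that the distributional derivatives $\partial^\alpha \widehat{D^s f}$ are genuine continuous functions (rather than mere distributions), which is a routine bookkeeping exercise using the Fourier-multiplier description above.
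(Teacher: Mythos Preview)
Your argument is essentially the paper's own: both take $D$ to be the constant-coefficient operator with Fourier symbol $P_{\mathcal{O}}$ from Lemma~\ref{lem:prop_open_orbit}(b), so that $\widehat{D^s f}=P^s\widehat f$, and then invoke the Leibniz rule; your explicit pigeonhole remark that each $\partial^\beta(P^s)$ with $|\beta|<s$ still contains an undifferentiated factor $P$ is exactly the content the paper compresses into the phrase ``the Leibniz rule implies.'' One small caveat: the ${\rm L}^1$ hypothesis on the derivatives of $f$ yields decay of $\widehat f$, not regularity of its distributional derivatives, so your justification of the continuity clause in Definition~\ref{defn:van_mom} is not quite as routine as stated---but the paper's proof is equally terse on this point.
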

\begin{proof}
Let $P(\xi) = \sum_{|\alpha| \le 2d} a_\alpha \xi^\alpha$ denote the polynomial from Lemma \ref{lem:prop_open_orbit}, i.e. $P$ vanishes precisely on $\mathcal{O}^c$. Define 
\[
 D = \sum_{|\alpha| \le 2d} a_\alpha (-2 \pi i)^{-|\alpha|} \partial^\alpha~.
\] Then $D$ is a linear partial differential operator of degree $k \le 2d$, and we have
$(D^sf)^\wedge(\xi) = P(\xi)^s \cdot \widehat{f}(\xi)$. But then the Leibniz rule implies that $D^sf$
 has vanishing moments in $\mathcal{O}^c$ of order $s$. 
\end{proof}

Note that under the assumptions of the lemma, the vanishing moment condition can be expressed by
\[
\forall \xi \in \mathcal{O}^c ~ \forall \alpha \in \mathbb{N}_0^d \mbox{ with } |\alpha|<s~:~ \int_{\mathbb{R}^d} x^\alpha f(x) e^{-2 \pi i \langle \xi, x \rangle} dx = 0~.
\]

\begin{remark}
 In the following statements, the condition $|\widehat{\psi}|_{t,t}< \infty$ occurs repeatedly. It should be noted that this condition is easy to guarantee by explicit assumptions on $\psi$: Indeed, $|\widehat{\psi}|_{t,t} < \infty$ is equivalent to boundedness of $\xi \mapsto \xi^\alpha \partial^\beta \widehat{\psi}$, for all multiindices $\alpha,\beta$ of length $\le t$. But the latter condition is guaranteed by integrability of $\partial^\alpha \left( x^\beta \psi \right)$, for all such multiindices, or equivalently, by integrability of $x^\beta \partial^\alpha \psi$. It is thus fulfilled by all compactly supported $\psi$ with continuous partial derivatives up to order $t$. 
\end{remark}

We can now easily state sufficient conditions for admissibility:
\begin{corollary} \label{cor:adm_cond_vm}
Assume that $\mathcal{O}$ is $(0,2,w)$-temperately embedded with index
$\ell$, and constant weight $w\equiv 1$. Then any function $\psi \in
{\rm L}^1(\mathbb{R}^d)$ with
vanishing moments in $\mathcal{O}^c$ of order $t>\ell+d/2+1$ and
$|\widehat{\psi}|_{t,t} < \infty$ is
admissible. There exist admissible vectors $\psi \in
C_c^{\infty}(\mathbb{R}^d)$.
\end{corollary}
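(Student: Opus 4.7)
The plan is to prove admissibility by applying Theorem \ref{thm:central} directly in the special case $p=q=2$, $s=0$, $v\equiv 1$, taking $f=\psi$. Under these choices the norm $\|\mathcal{W}_\psi\psi\|_{L^{2,2}_v}$ coincides with the $L^2(G)$-norm, and the threshold in Theorem \ref{thm:central} becomes $t > \ell + 0 + 1 + d/2$, exactly the hypothesis of the corollary. Since $\mathcal{O}$ is $(0,2,w)$-temperately embedded with $w\equiv1$ and index $\ell$, Theorem \ref{thm:central} yields $\|\mathcal{W}_\psi\psi\|_{L^2(G)} \preceq |\widehat{\psi}|_{t,t}^2 < \infty$, which is precisely the defining condition of an admissible vector.

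For the second assertion, I would construct an explicit example in $C_c^\infty(\mathbb{R}^d)$ using Lemma \ref{lem:generate_van_moments}. Let $D$ denote the constant coefficient differential operator provided by that lemma, and pick any nonzero $\rho \in C_c^\infty(\mathbb{R}^d)$. Set $\psi = D^t \rho$, where $t > \ell + d/2 + 1$ is an integer. Then $\psi \in C_c^\infty(\mathbb{R}^d)$, and Lemma \ref{lem:generate_van_moments} guarantees that $\psi$ has vanishing moments in $\mathcal{O}^c$ of order $t$. The Schwartz-norm condition $|\widehat{\psi}|_{t,t}<\infty$ is automatic, since $\psi \in C_c^\infty$ implies $\widehat{\psi} \in \mathcal{S}(\mathbb{R}^d)$.

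It remains to ensure $\psi\not=0$, so that we indeed obtain a nontrivial admissible vector. Here the relation $\widehat{D^t\rho}(\xi) = P(\xi)^t \widehat{\rho}(\xi)$ (with $P$ the orbit polynomial of Lemma \ref{lem:prop_open_orbit}(b)) is decisive. By Paley--Wiener, $\widehat{\rho}$ is the restriction of an entire function and hence cannot vanish identically on the nonempty open set $\mathcal{O}$. Since $P$ is nonzero on $\mathcal{O}$, the product $P^t\widehat{\rho}$ is nonzero on a set of positive measure, so $\psi\not=0$, completing the argument. No step presents a genuine obstacle; the only mildly delicate point is confirming $\psi\not=0$, which is immediate from Paley--Wiener combined with the zero-set description of $P$.
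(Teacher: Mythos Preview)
Your proof is correct and follows essentially the same approach as the paper: both deduce the first claim as the special case $p=q=2$, $s=0$, $w\equiv 1$, $f=\psi$ of Theorem \ref{thm:central}, and both construct a nonzero $\psi \in C_c^\infty(\mathbb{R}^d)$ by applying a suitable power of the operator $D$ from Lemma \ref{lem:generate_van_moments} to an arbitrary nonzero $\rho \in C_c^\infty(\mathbb{R}^d)$, then checking $\widehat{\psi}=P^t\widehat{\rho}\not\equiv 0$. The only (cosmetic) difference is that the paper argues $\widehat{\rho}$ is continuous and nonzero, hence nonvanishing on some open set which necessarily meets the full-measure set $\mathcal{O}$, whereas you invoke Paley--Wiener; both arguments work.
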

\begin{proof}
The first statement is a special case of Theorem \ref{thm:central}.
The existence statement follows by picking an arbitrary nonzero $f
\in C_c^\infty(\mathbb{R}^d)$ and letting $\psi = D^{\ell+d+1} f$.
By construction of the operator $D^{\ell+d+1}$,
\[ \widehat{\psi}(\xi) = P(\xi) \cdot
\widehat{f}(\xi) ~,\]  and $P$ is a polynomial vanishing precisely
on $\mathcal{O}^c$, with order at least $\ell+d+1$. Since
$\widehat{f}$ is continuous and nonzero, $\widehat{f} \cdot P
\not=0$, and thus $\psi$ is nonzero. Since in addition $V_{\psi}
\psi \in {\rm L}^2(G)$, this implies that $\psi$ is admissible.
\end{proof}

The same argument, with slight adjustments, yields sufficient
conditions for ${\rm L}^1$-admissible vectors.
\begin{corollary} \label{cor:L1_adm_cond_vm}
Let $v$ be a weight on $G$ satisfying $v(x,h) \le (1+|x|)^s w(h)$. 
Assume that $\mathcal{O}$ is $(s,1,w)$-temperately embedded with index
$\ell$. Then any function $\psi \in {\rm L}^1(\mathbb{R}^d) $ with vanishing moments in
$\mathcal{O}^c$ of order $t> \ell+s + d+1$ and $|\widehat{\psi}|_{t,t} < \infty$ is in $\mathcal{H}_{1,v}$. There exist
nonzero vectors $\psi \in C_c^{\infty}(\mathbb{R}^d) \cap
\mathcal{H}_{1,v}$.
\end{corollary}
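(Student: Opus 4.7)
The plan is to mimic the proof of Corollary \ref{cor:adm_cond_vm}, only now applying Theorem \ref{thm:central} with $p = q = 1$ and with $f = \psi$. For this choice the theorem requires vanishing moments in $\mathcal{O}^c$ of order $t > \ell + s + 1 + d/p = \ell + s + d + 1$, which is precisely the hypothesis imposed in the corollary. The temperate-embedding condition of the theorem, namely that $\mathcal{O}$ is $(s,1,w)$-temperately embedded, is also assumed verbatim. The theorem then delivers the estimate
\[
\|\mathcal{W}_\psi \psi\|_{{\rm L}^1_v} \le C\, |\widehat{\psi}|_{t,t}^{\,2} < \infty,
\]
which is exactly the defining condition for $\psi \in \mathcal{H}_{1,v}$, once we know that $\psi \in {\rm L}^2(\mathbb{R}^d)$.

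For the $L^2$-membership, note that $|\widehat{\psi}|_{t,t} < \infty$ with $t > \ell + s + d + 1 > d/2$ forces the pointwise bound $|\widehat{\psi}(\xi)| \preceq (1+|\xi|)^{-t}$, so $\widehat{\psi} \in {\rm L}^2(\mathbb{R}^d)$ and hence $\psi \in {\rm L}^2(\mathbb{R}^d)$ by Plancherel. This completes the first part of the statement.

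For the existence of a nonzero $\psi \in C_c^\infty(\mathbb{R}^d) \cap \mathcal{H}_{1,v}$, we adapt the construction used in the proof of Corollary \ref{cor:adm_cond_vm}. Pick any nonzero $f \in C_c^\infty(\mathbb{R}^d)$ and set $\psi = D^N f$, where $D$ is the constant-coefficient differential operator of Lemma \ref{lem:generate_van_moments} and $N$ is any integer with $N > \ell + s + d + 1$ (for instance $N = \lceil \ell + s \rceil + d + 2$). Then $\psi \in C_c^\infty(\mathbb{R}^d)$ because $D$ preserves $C_c^\infty$, and Lemma \ref{lem:generate_van_moments} guarantees vanishing moments in $\mathcal{O}^c$ of order $N$. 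Moreover $\widehat{\psi}(\xi) = P(\xi)^N \widehat{f}(\xi)$, with $P$ the polynomial from Lemma \ref{lem:prop_open_orbit} whose zero set is $\mathcal{O}^c$; since $P$ is a nonzero polynomial and $\widehat{f}$ is a nonzero real-analytic function on $\mathbb{R}^d$, the product does not vanish identically, so $\psi \ne 0$. Finally, $\psi$ being compactly supported and smooth makes $|\widehat{\psi}|_{t,t} < \infty$ for every $t$, so the first part of the corollary applies and gives $\psi \in \mathcal{H}_{1,v}$.

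There is no real obstacle in this proof; the only point requiring care is the bookkeeping that the vanishing-moment threshold $t > \ell + s + d + 1$ matches the threshold $t > \ell + s + 1 + d/p$ of Theorem \ref{thm:central} precisely at $p=1$, and that the temperate-embedding index $q$ of the present hypothesis is $1$, which is exactly what is needed when integrating the wavelet transform of $\psi$ with itself against ${\rm L}^1_v(G)$.
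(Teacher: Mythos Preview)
Your proof is correct and follows exactly the approach the paper intends: the paper itself gives no detailed argument, merely stating that ``the same argument, with slight adjustments'' as for Corollary~\ref{cor:adm_cond_vm} works, and you have supplied precisely those adjustments (taking $p=q=1$ in Theorem~\ref{thm:central}, and replacing the exponent $\ell+d+1$ by an integer exceeding $\ell+s+d+1$ in the construction via Lemma~\ref{lem:generate_van_moments}). Your explicit verification that $\psi \in {\rm L}^2(\mathbb{R}^d)$ is a welcome addition that the paper leaves implicit.
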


We next turn to coorbit spaces. Here a direct application of Theorem
\ref{thm:central} yields the following sufficient condition. Note
that for the wavelet, we may take any element in
$\mathcal{F}^{-1}C_c^\infty(\mathcal{O})$, which automatically
fulfills any vanishing moment condition.
\begin{corollary} \label{cor:coorb_vm} Let $v$ be a weight on $G$ satisfying $v(x,h) \le (1+|x|)^s w(h)$. 
 Let $1 \le p \le \infty$ and $1 \le q < \infty$.
Assume that $\mathcal{O}$ is $(s,q,w)$-temperately embedded with index
$\ell$. Then any function $f \in {\rm L}^1(\mathbb{R}^d)$ with vanishing moments in
$\mathcal{O}^c$ of order $t>\ell+s+d/p+1$ and $|\widehat{\psi}|_{t,t} < \infty$ is in ${\rm
Co}(L_{w}^{p,q}(G))$.
\end{corollary}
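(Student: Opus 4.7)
The plan is to reduce the statement directly to Theorem \ref{thm:central} by picking a convenient analyzing wavelet and checking that the hypotheses transfer.

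First, I would choose a nonzero analyzing wavelet $\psi \in \mathcal{F}^{-1} C_c^\infty(\mathcal{O})$. Such a $\psi$ has two free properties that we need: (i) all distributional derivatives of $\widehat{\psi}$ vanish identically on $\mathcal{O}^c$, so $\psi$ has vanishing moments in $\mathcal{O}^c$ of arbitrary order $t$; (ii) since $\widehat{\psi}$ is a smooth compactly supported function, $|\widehat{\psi}|_{r,m} < \infty$ for every $r,m \ge 0$. Moreover, by Theorem \ref{thm:ex_int_vec} together with Lemma \ref{lem:weight_control}, such a $\psi$ lies in $\mathcal{H}_{1,v_0}$ for the control weight $v_0$ associated to $Y = {\rm L}^{p,q}_v(G)$, and hence qualifies as a legitimate analyzing wavelet for the coorbit space ${\rm Co}({\rm L}^{p,q}_v(G))$.

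Second, I would apply Theorem \ref{thm:central} to this pair $(\psi, f)$. Both functions have vanishing moments in $\mathcal{O}^c$ of order $t > \ell + s + 1 + d/p$ (for $f$ by hypothesis, for $\psi$ by construction) and both satisfy $|\widehat{\cdot}|_{t,t} < \infty$ (the condition on $f$ is the hypothesis, which I read as $|\widehat{f}|_{t,t}<\infty$; the condition on $\psi$ is automatic). Theorem \ref{thm:central} then delivers
\[
\|\mathcal{W}_\psi f\|_{{\rm L}^{p,q}_v(G)} \le C_{p,q} |\widehat{f}|_{t,t} |\widehat{\psi}|_{t,t} < \infty~.
\]

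Third, I would verify that $f$ genuinely represents an element of the reservoir $\mathcal{H}_{1,v_0}^{\sim}$, so that coorbit membership is meaningful. Since $\mathcal{H}_{1,v_0}$ contains $\mathcal{F}^{-1} C_c^\infty(\mathcal{O})$ continuously and hence embeds into a space of smooth functions, every $f \in {\rm L}^1(\mathbb{R}^d) \subset \mathcal{S}'(\mathbb{R}^d)$ defines a conjugate-linear functional on $\mathcal{H}_{1,v_0}$ by the usual duality pairing extending the ${\rm L}^2$ inner product. The extended wavelet transform of $f$ coincides with the Fourier-side formula $(|\det(h)|^{1/2} \widehat{f} \cdot D_h \overline{\widehat{\psi}})^{\vee}(x)$ used in the proof of Theorem \ref{thm:central}, so the finiteness estimate above is precisely the coorbit space condition. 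By the consistency part of coorbit theory, ${\rm Co}({\rm L}^{p,q}_v(G))$ is independent of the analyzing wavelet, so the particular choice of $\psi$ does not matter.

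The only step requiring some care is the identification of $f$ with an element of $\mathcal{H}_{1,v_0}^{\sim}$; once this is in place, the statement is a direct corollary of Theorem \ref{thm:central} applied to a Schwartz wavelet bandlimited to $\mathcal{O}$.
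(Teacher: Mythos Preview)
Your proposal is correct and matches the paper's own argument: the paper simply remarks that the result is a ``direct application of Theorem \ref{thm:central}'', using any $\psi \in \mathcal{F}^{-1} C_c^\infty(\mathcal{O})$ as analyzing wavelet since such $\psi$ automatically satisfies every vanishing moment condition. Your additional care concerning the identification of $f$ with an element of $\mathcal{H}_{1,v_0}^{\sim}$ is more explicit than the paper (which glosses over this point here and only addresses the embedding into tempered distributions later), but the overall route is the same.
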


For the systematic study of coorbit spaces, it is useful to
introduce specific spaces of test functions and tempered
distributions. We let
\[
\mathcal{S}'_{\mathcal{O}^c} = \{ \varphi \in
\mathcal{S}'(\mathbb{R}^d) ~:~ {\rm supp}(\varphi) \subset
\mathcal{O}^c \}
\]
and
\[
\mathcal{S}(\mathcal{O}) = \{ f \in \mathcal{S}(\mathbb{R}^d)~:~
\forall \varphi \in \mathcal{S}'_{\mathcal{O}^c}: \varphi(f) = 0
\}~.
\]
As intersection of kernels of tempered distributions,
$\mathcal{S}(\mathcal{O})$ is a closed subspace of
$\mathcal{S}(\mathbb{R}^d)$, thus a Fr\'echet space in its own
right, containing $\mathcal{C}_c^\infty(\mathcal{O})$. Moreover, it
is straightforward to check that the dual
$\mathcal{S}(\mathcal{O})'$ can be identified with the quotient
space $\mathcal{S}'(\mathbb{R}^d)/\mathcal{S}'_{\mathcal{O}^c}$.
This should be compared to the case $\mathcal{O}^c= \{ 0 \}$ of the
similitude group: Here $\mathcal{S}'_{\{ 0 \}}$ is just the span of
the partial derivatives of the $\delta$-distribution at zero. The
inverse image under the Fourier transform is the space of polynomials, and it is
customary to regard elements of the homogeneous Besov spaces as
tempered distributions modulo polynomials.

Clearly any element
in $\mathcal{F}^{-1} \mathcal{S}(\mathcal{O})$ has vanishing moments
in $\mathcal{O}^c$ of arbitrary order.
Thus, as a consequence of Corollary \ref{cor:coorb_vm}, we obtain
the following embedding result. Note that the continuity statement
follows from the fact that the estimate in Theorem \ref{thm:central}
uses the Schwartz norms. The density statement follows from Corollary \ref{cor:C_c_O_dense}.
\begin{corollary}  Let $1 \le p \le \infty$ and $1 \le q < \infty$.
Assume that $\mathcal{O}$ is $(s,q,w)$-temperately embedded. Then
$\mathcal{F}^{-1}\mathcal{S}(\mathcal{O}) \subset {\rm Co}({\rm
L}^{p,q}_v(G))$ is a continuous embedding, and dense for $p < \infty$.
\end{corollary}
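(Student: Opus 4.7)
The plan is to verify the inclusion pointwise using Corollary \ref{cor:coorb_vm}, extract continuity as a byproduct of the Schwartz-norm bound in Theorem \ref{thm:central}, and finally reduce density to Corollary \ref{cor:C_c_O_dense}.

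First I would check that every $f \in \mathcal{F}^{-1}\mathcal{S}(\mathcal{O})$ has vanishing moments of \emph{arbitrary} order in $\mathcal{O}^c$. Indeed, for any $\xi \in \mathcal{O}^c$ and any multiindex $\alpha$, the distribution $\partial^\alpha \delta_\xi$ lies in $\mathcal{S}'_{\mathcal{O}^c}$, so $(\partial^\alpha \widehat{f})(\xi)=0$ by definition of $\mathcal{S}(\mathcal{O})$; since $\widehat{f}$ is Schwartz, these derivatives are genuine continuous functions, and $|\widehat{f}|_{t,t}<\infty$ for every $t$. Thus the hypotheses of Corollary \ref{cor:coorb_vm} are satisfied for any $t>\ell+s+d/p+1$, yielding $f\in Co({\rm L}^{p,q}_v(G))$ and the set-theoretic inclusion.

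Next I would deduce continuity. The space $\mathcal{F}^{-1}\mathcal{S}(\mathcal{O})$ carries the Fr\'echet topology induced from $\mathcal{S}(\mathbb{R}^d)$ via $\mathcal{F}$, i.e.\ the topology generated by the seminorms $f\mapsto |\widehat{f}|_{r,m}$. Fixing any nonzero analyzing wavelet $\psi\in\mathcal{F}^{-1}C_c^\infty(\mathcal{O})$ (which exists and belongs to every $\mathcal{H}_{1,v_0}$ by Theorem \ref{thm:ex_int_vec}), Theorem \ref{thm:central} yields, with $t>\ell+s+d/p+1$,
\[
\|f\|_{Co({\rm L}^{p,q}_v(G))} = \|\mathcal{W}_\psi f\|_{{\rm L}^{p,q}_v} \le C_{p,q}\,|\widehat{\psi}|_{t,t}\,|\widehat{f}|_{t,t},
\]
so the embedding is continuous with respect to the Schwartz seminorm $|\cdot|_{t,t}$, hence continuous for the full Fr\'echet topology.

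For density in the case $p<\infty$ (recall $q<\infty$ is already in force), I would simply chain inclusions: Corollary \ref{cor:C_c_O_dense} asserts that $\mathcal{F}^{-1}C_c^\infty(\mathcal{O})$ is dense in $Co({\rm L}^{p,q}_v(G))$, and since $\mathcal{F}^{-1}C_c^\infty(\mathcal{O})\subset \mathcal{F}^{-1}\mathcal{S}(\mathcal{O})$, the larger space is a fortiori dense. No serious obstacle arises: the substantive analytic content was absorbed into Theorem \ref{thm:central} and Corollary \ref{cor:C_c_O_dense}, and the only delicate point is the routine verification that $\mathcal{S}(\mathcal{O})$ indeed forces vanishing of all derivatives of $\widehat{f}$ on $\mathcal{O}^c$, which follows directly from the definition by testing against point derivatives.
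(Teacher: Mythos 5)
Your proposal is correct and follows essentially the same route as the paper: establish that elements of $\mathcal{F}^{-1}\mathcal{S}(\mathcal{O})$ have vanishing moments of arbitrary order in $\mathcal{O}^c$, apply Corollary \ref{cor:coorb_vm} for the inclusion, read off continuity from the Schwartz-norm bound in Theorem \ref{thm:central}, and obtain density from Corollary \ref{cor:C_c_O_dense}. Your explicit verification via testing against $\partial^\alpha\delta_\xi \in \mathcal{S}'_{\mathcal{O}^c}$ is a welcome elaboration of a step the paper only asserts.
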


As a specific consequence, we note that by duality we obtain the
continuous embedding $\mathcal{H}_{1,v}^{\sim} \subset
\mathcal{F}^{-1}\mathcal{S}'(\mathbb{R}^d)/\mathcal{F}^{-1}\mathcal{S}'_{\mathcal{O}^c}$.
This has the nice consequence that all coorbit spaces for which the respective temperate embeddedness conditions hold fit in the following framework: We pick a wavelet $\psi \in \mathcal{F}^{-1} C_c^\infty(\mathcal{O}) \subset \mathcal{S}(\mathbb{R}^d)$, and define the wavelet
transform of an arbitrary tempered distributions $\varphi$ by letting \[ \mathcal{W}_\psi
\varphi (x,h) = \langle \varphi, \pi(x,h) \psi \rangle~, \] where
the bracket $\langle \cdot, \cdot \rangle :
\mathcal{S}'(\mathbb{R}^d) \times \mathcal{S}(\mathbb{R}^d) \to
\mathbb{C}$ denotes the sesqui-linear version of the duality between
$\mathcal{S}$ and $\mathcal{S}'$. This definition then extends the
definition of the wavelet transform on $\mathcal{H}_{1,v}^\sim$, and 
we can alternatively define ${\rm Co} Y$ as the space of all tempered distributions $\varphi$ with $\mathcal{W}_\psi \varphi \in Y$, identifying two tempered distributions $\varphi_1, \varphi_2$ whenever 
$(\varphi_1-\varphi_2)^\wedge \in \mathcal{S}'_{\mathcal{O}^c}$ (in which case $\mathcal{W}_\psi (\varphi_1-\varphi_2)$ vanishes identically). 

Thus every coorbit space arises as a subspace
of a quotient space of $\mathcal{S}'(\mathbb{R}^d)$. An alternative could consist in
using the dense embedding $\mathcal{F}^{-1}C_c^\infty(\mathcal{O})
\subset \mathcal{H}_{1,v}$, which is available without any
assumptions on the dual orbit. However, this space is less
compatible with the Fourier transform; the right-hand side of the dual embedding $\mathcal{H}_{1,v}^\sim \subset \left( \mathcal{F}^{-1} C_c^\infty(\mathcal{O}) \right)'$ does not seem particularly accessible. 

\section{Temperately embedded orbits in dimension 2}
\label{sect:exs}

In this section we will exhibit a large variety of examples for
which the rather technical conditions of the previous sections can
be verified. The following results will show that, for {\em any}
admissible dilation group in dimension two and a large class of
weights $w$ (which include all weights that have been studied
previously in this context), the dual orbit is $w$-temperately
embedded, which makes Theorem \ref{thm:central} and its various
corollaries available for all these groups. Essentially, we will
treat three different types of groups, which constitute a set of
representatives of all admissible matrix groups in dimension 2, up
to conjugacy and finite extensions. The fact that the list of groups
treated in the following subsections is such a set of
representatives is proved in \cite{FuDiss,FuCuba}. Furthermore, note
that the properties that we are interested in are well-behaved under
conjugation: If $H$ and $H'$ are conjugate by some $g \in {\rm
GL}(\mathbb{R},d)$, then weights on $H$ correspond to their
conjugate images on $H'$, and the associated dual orbits are mapped
into each other by $g^T$, and thus inherit the property of being
temperately embedded. Similarly, if $H< H'$ are two admissible
dilation group, then $H$ has finite index in $H'$, with the same
dual open orbit \cite{FuCuba}, and all relevant properties are transferred from $H$ to
$H'$ and vice versa.

\subsection{The similitude group}
Let
\[
 H = \left\{ \left( \begin{array}{cc} a & b \\ -b & a \end{array}
 \right)~:~a, b \in \mathbb{R}, a^2+b^2>0 \right\}~.
\] This is the {\bf similitude group} in two dimensions, and it is
the first systematically studied higher-dimensional dilation group,
\cite{Mu,AnMuVa}. The coorbit spaces associated to the isotropic weights on $H$ that we shall employ below
are just the usual (isotropic) homogeneous Besov spaces.

The dual orbit is computed as $\mathcal{O} =
 \mathbb{R}^2 \setminus \{ 0 \}$.
Let $h = \left( \begin{array}{cc} a & b \\ -b & a \end{array}
 \right) \in H$ be given.
 We fix $\xi_0 = (1,0)^T \in \mathcal{O}$, then $h^T \xi_0 = (a,b)$.
We assume that the weight $w$ on $H$ fulfills $w(h) \le
(a^2+b^2)^u+(a^2+b^2)^{-u}$ with $u>0$. For the matrix norm, the choice $\| h \| =
\left( a^2+b^2 \right)^{1/2}$ will be most convenient, and we will use the
euclidean norm on $\mathbb{R}^2$. Haar measure on
$H$ is given by $\frac{dadb}{a^2+b^2}$.

Given any $\xi \not= (0,0)^T$, it is obvious that ${\rm
dist}(\xi, \mathcal{O}^c) = |\xi|$, and hence
\[
 A(\xi) = \min \left( |\xi|,\frac{1}{1+|\xi|} \right)~.
\]
Thus, in order to verify condition (i) of Definition
\ref{defn:mod_embedded}, we need to prove finiteness of
\[
\int_{\mathbb{R}^2 \setminus \{ 0 \}} (a^2+b^2)^{q/2-1}
(1+(a^2+b^2)^{1/2})^{(s+3)q} (a^2+b^2)^{\pm uq} ~\min  \left(
(a^2+b^2)^{1/2},\frac{1}{1+(a^2+b^2)^{1/2}} \right)^{\ell
q}~\frac{dadb}{a^2+b^2}~,
\]
for $\ell$ sufficiently large. But this follows easily from the
observation that
\[
(a^2+b^2)^{\pm 1/2} \cdot \min  \left(
(a^2+b^2)^{1/2},\frac{1}{1+(a^2+b^2)^{1/2}} \right) \le 1,
\] and the integrability of $(1+(a^2+b^2)^{1/2})^{-r}$ whenever $r>2$.

For the second condition, we first compute
\[
A(h^{-T} \xi_0) = \min  \left(
(a^2+b^2)^{-1/2},\frac{1}{1+(a^2+b^2)^{-1/2}} \right)~,
\] hence we need to verify finiteness of
\[
\int_{\mathbb{R}^2 \setminus \{ 0 \}} (a^2+b^2)^{-1/2-1/q}
(1+(a^2+b^2)^{1/2})^{s+3} (a^2+b^2)^{\pm u} ~\min  \left(
(a^2+b^2)^{-1/2},\frac{1}{1+(a^2+b^2)^{-1/2}} \right)^{\ell
}~\frac{dadb}{a^2+b^2}~.
\] This follows by a similar argument.

\begin{remark}
 The calculations carried out for $d=2$ immediately generalize to similitude groups in higher dimensions.
\end{remark}

\begin{remark} \label{rem:suff_mom_cond_L1}
 Note that in order to apply Corollary \ref{cor:L1_adm_cond_vm} to obtain analyzing vectors for the coorbit spaces, we need sufficient conditions for ${\rm L}^1_{v_0}(G)$, where $v_0$ is a control weight for ${\rm L}^{p,q}_v$. By \ref{lem:weight_control} (and the subsequent remark), an upper bound for $v_0$ is obtained by multiplying $v$ with certain powers of $(1+\| h^{\pm 1} \|_\infty)$,  $(|{\rm det}(h)|+|{\rm det}(h)|^{-1})$, and $(1+\Delta_G(0,h^{\pm 1}))$. Each of these additional factors is controlled by suitable powers of the auxiliary function $A$, and thus Corollary \ref{cor:L1_adm_cond_vm} becomes applicable. 
 
A polynomial $P$ vanishing precisely on $\mathcal{O}^c$ is given by $P(\xi) = \xi_1^2+\xi_2^2$.  Hence we can take the Laplacian as the differential operator $D$ inducing vanishing moments of a given order via Lemma \ref{lem:generate_van_moments}.
\end{remark}

\subsection{The diagonal group}
Let
\[
 H = \left\{ \left( \begin{array}{cc} a & 0 \\ 0 & b \end{array}
 \right)~:~a, b \in \mathbb{R}, ab\not= 0 \right\}~.
\] Haar measure on
this dilation group is given by $\frac{dadb}{|ab|}$. The dual orbit
is determined as
\[ \mathcal{O} =
 \mathbb{R}^2 \setminus \left( (\{ 0 \} \times \mathbb{R}) \cup (\mathbb{R} \times \{ 0
\})\right)~.\] For $h = \left( \begin{array}{cc} a & 0 \\ 0 & b
\end{array}
 \right) \in H$ and $\xi_0 = (1,1)^T \in \mathcal{O}$, we obtain $h^T \xi_0 = (a,b)^T$.
We assume that the weight $w$ on $H$ fulfills $w(h) \le
(|a|+|a|^{-1})^t(|b|+|b|^{-1})^u$, with $t,u>0$. Most other reasonable weights
(isotropic or anisotropic) can be majorized by this weight, for
suitably large $u$. The matrix norm is defined as $\| h \| =
\max(|a|,|b|)$. The determinant of $h$ is $ab$.

For any $\xi = (\xi_1,\xi_2)^T \in \mathcal{O}$, one computes that
\[ {\rm dist}(\xi, \mathcal{O}^c) = \min(|\xi_1|,|\xi_2|) ~\mbox{ and }~
\sqrt{|\xi|^2 - {\rm dist}(\xi, \mathcal{O}^c)^2} =  \max(|\xi_1|,|\xi_2|)~. \] Hence we obtain 
\[
 A(\xi) = \min \left( \frac{\min(|\xi_1|,|\xi_2|)}{1+\max(|\xi_1|,|\xi_2|)},\frac{1}{1+|\xi_1|+|\xi_2|} \right)~,
\]
where we have taken the liberty to replace the euclidean norm by the slightly more tractable $\ell^1$-norm.  
Thus, condition (i) of Definition \ref{defn:mod_embedded} requires
finiteness of
\begin{eqnarray*}
 \lefteqn{\int_{\mathbb{R'}} \int_{\mathbb{R}'} |ab|^{q/2-1}
(1+\max(|a|,|b|))^{(s+3)q} (|a|+|a|^{-1})^{qt}(|b|+|b|^{-1})^{uq}} \\
 &\times& \min
\left( \frac{\min(|a|,|b|)}{1+\max(|a|,|b|)},\frac{1}{1+|a|+|b|}
\right)^{\ell q}~\frac{dadb}{|ab|}~,
\end{eqnarray*} for $\ell$ sufficiently large. In order to prove
this, we first observe that
\[
|a|^{\pm 1} ~\min \left(
\frac{\min(|a|,|b|)}{1+\max(|a|,|b|)},\frac{1}{1+|a|+|b|} \right)
\le 1
\] holds, and likewise for $b$. Thus the product
\[
|ab|^{q/2-2} (1+\max(|a|,|b|))^{(s+3)q}
(|a|+|a|^{-1})^{qt}(|b|+|b|^{-1})^{qu} ~\min \left(
\frac{\min(|a|,|b|)}{1+\max(|a|,|b|)},\frac{1}{1+|a|+|b|}
\right)^{r}
\] is bounded for $r$ sufficiently large, and then increasing $r$ by three results in a convergent integral.

For condition $(ii)$ of Definition \ref{defn:mod_embedded} we need
to ensure finiteness of
\begin{eqnarray*}
\lefteqn{\int_{\mathbb{R}'} \int_{\mathbb{R}'} |ab|^{-1/2-1/q}
(1+\max(|a|,|b|))^{s+3} (|a|+|a|^{-1})^{t}(|b|+|b|^{-1})^{u}} \\
 & \times & ~\min
\left(
\frac{\min(|a|^{-1},|b|^{-1})}{1+\max(|a|^{-1},|b|^{-1})},\frac{1}{1+|a|^{-1}+|b|^{-1}}
\right)^{\ell}~\frac{dadb}{|ab|}~.
\end{eqnarray*} Noting that
\[
\frac{\min(|a|^{-1},|b|^{-1})}{1+\max(|a|^{-1},|b|^{-1})}
 = \frac{1}{\max(|a|,|b|)}\frac{1}{1+\max(|a|^{-1},|b|^{-1})}~,
\] we can again estimate
\[
|a|^{\pm 1}
\min \left( \frac{\min(|a|^{-1},|b|^{-1})}{1+\max(|a|^{-1},|b|^{-1})},\frac{1}{1+|a|^{-1}+|b|^{-1}} \right)
\le 1~,
\] and likewise for $b$. Hence we can choose an exponent $\ell>0$ sufficiently large
such that the integrand becomes bounded, and increasing the exponent
by three results in a convergent integral.

\begin{remark}
 Also for the diagonal group, the reasoning easily extends to arbitrary dimensions.

Again, the proof showed that functions of the type $(1+\| h^{\pm 1} \|)$,  $(|{\rm det}(h)|+|{\rm det}(h)|^{-1})$ and $(1+\Delta_G(0,h^{\pm 1}))$ can be controlled by suitable powers of the auxiliary function $A$. Hence, the same reasoning as for the diagonal group allows to conclude that  Corollary \ref{cor:L1_adm_cond_vm} also provides sufficient criteria for analyzing vectors. 

A polynomial $P$ vanishing precisely on $\mathcal{O}^c$ is given by $P(\xi) = \xi_1 \xi_2$.  Hence we can take the operator $Df = \partial_{x_1} \partial_{x_2} f$ and its powers to induce vanishing moments of a given order via Lemma \ref{lem:generate_van_moments}.
\end{remark}

\subsection{The shearlet groups}
Fix a real parameter $c$, and let
\[
 H = H_c = \left\{ \left( \begin{array}{cc} a & b \\ 0 & a^c \end{array}
 \right)~:~a, b \in \mathbb{R}, a\not= 0 \right\}~.
\] Here we use the convention $a^c = {\rm sign}(a) |a|^c$ for $a < 0$. 
Note that for $c=1/2$, this is the shearlet group introduced in
\cite{KuLa}, and studied (e.g.) in \cite{DaKuStTe,DaStTe10}. The other groups are obviously closely related; the parameter $c$ can be understood as controlling the anisotropy used in the scaling.
 Haar measure on $H$  is given by $db \frac{da}{|a|^{2}}$, the modular function is $\Delta_H(h) = |a|^{c-1}$. The dual orbit is
computed as
\[ \mathcal{O} =
 \mathbb{R}^2 \setminus (\{ 0 \} \times \mathbb{R}))~.\]
For $h = \left( \begin{array}{cc} a & b \\ 0 & a^c
\end{array}
 \right) \in H$ and $\xi_0 = (1,0)^T \in \mathcal{O}$, we obtain $h^T \xi_0 = (a,b)^T$.

 We assume that the weight $w$ on $H$ fulfills 
 \[ w(h) \le
(|a|+|a|^{-1} + |b|)^u ~~. \] Note that the right-hand side is not necessarily submultiplicative (unless $|c| \le 1$), but since we only need an upper estimate of the weight to guarantee temperate embeddedness, this property is immaterial at this point. 
For the shearlet group, corresponding to $c=1/2$, the authors of  
\cite{DaStTe10,DaKuStTe} considered weights of the sort 
\begin{equation} \label{eqn:weight_shearlet} w(h) = |a|^{r_1} (|a|+|a|^{-1}+|a^{-1/2} b|)^{r_2} ~.\end{equation} (When comparing with \cite{DaStTe10,DaKuStTe}, note that the parametrization of the dilation group used in these sources differs from the one used here.)  $w$ defined via (\ref{eqn:weight_shearlet}) fullfills the above growth estimate with  $u = r_1+2r_2$. 

For the matrix norm, the choice $\| h \| =
\max(|a|,|a|^c,|b|)$ is obviously equivalent to $\| h \|_\infty$, and more convenient to use. 
The determinant of $h$ is $|a|^{1+c}$.

For $\xi \in \mathcal{O}$, we have ${\rm dist}(\xi,\mathcal{O}^c) =
|\xi_1|$ and $\sqrt{|\xi|^2-{\rm dist}(\xi,\mathcal{O}^c)^2} = |\xi_2|$. Hence condition (i) in
Definition \ref{defn:mod_embedded} requires finiteness of
\begin{eqnarray*}
\lefteqn{\int_{\mathbb{R}'} \int_{\mathbb{R}} |a|^{(-q/2-1)(1+c)}
(1+\max(|a|,|a|^c,|b|))^{(s+3)q} (|a|+|a|^{-1}+|b|)^{uq}} \\
& \times & \min \left(
\frac{|a|}{1+|b|},\frac{1}{1+|a|+|b|} \right)^{\ell q}~db
\frac{da}{|a|^{2}}~
\end{eqnarray*} for sufficiently large $\ell$. This is easily achieved
by the balancing argument used for the previous cases, since
\[
\max(|a|,|a|^{-1},|b|) \cdot ~\min \left(
\frac{|a|}{1+|b|},\frac{1}{1+|a|+|b|} \right) \le 1~,
\] and $(1+|a|+|b|)^{-2-\epsilon}$ integrable.

For the second condition, we first compute $h^{-1} = \left(
\begin{array}{cc} a^{-1} & - a^{-c-1} b \\ 0 & a^{-c} \end{array}
\right)$. Thus condition (ii) translates to
\begin{eqnarray*}
\lefteqn{\int_{\mathbb{R}'} \int_{\mathbb{R}} |a|^{(-1/2-1/q)(1+c)}
(1+\max(|a|,|b|))^{3} (|a|+|a|^{-1}+|b|)^{u}} \\ & \times & \min \left(
\frac{|a|^{-1}}{1+|a|^{-c-1}|b|},\frac{1}{1+|a|^{-1}+|a|^{-c-1}|b|}
\right)^{\ell}~db\frac{da}{|a|^2}< \infty~.
\end{eqnarray*} To see that this holds for large enough $\ell$, we first note
that \begin{eqnarray} \lefteqn{ |a|^{\pm 1} ~\min \left(
\frac{|a|^{-1}}{1+|a|^{-c-1}|b|},\frac{1}{1+|a|^{-1}+|a|^{-c-1}|b|}
\right) = } \nonumber \\ &= & |a|^{\pm 1} ~\min \left(
\underbrace{\frac{1}{|a|+|a|^{-c}|b|}}_{\le
|a|^{-1}},\underbrace{\frac{|a|}{1+|a|+|a|^{-c}|b|}}_{\le |a|}
\right) \le 1 \label{ineq:a}~.
\end{eqnarray}
In addition,
\[ |b| ~\min \left(
\frac{|a|^{-1}}{1+|a|^{-c-1}|b|},\frac{1}{1+|a|^{-1}+|a|^{-c-1}|b|}
\right) \le \frac{|b|}{|a|+|a|^{-c}|b|} = |a|^c \frac{|b| }{|a|^{1+c}+|b|} \le
|a|^c~,\]  which implies together with inequality (\ref{ineq:a}):
\[
|b| ~\min \left(
\frac{|a|^{-1}}{1+|a|^{-c-1}|b|},\frac{1}{1+|a|^{-1}+|a|^{-c-1}|b|}
\right)^{1+|c|} \le 1~.
\]
Thus the usual course of argument, together with
\begin{eqnarray*}
\lefteqn{\int_{\mathbb{R}'} \int_{\mathbb{R}} \min \left(
\frac{|a|^{-1}}{1+|a|^{-c-1}|b|},\frac{1}{1+|a|^{-1}+|a|^{-c-1}|b|}
\right)^{r} db da} \\ & = & \int_{\mathbb{R}'} \int_{\mathbb{R}}
~\min \left(\frac{1}{|a|+|a|^{-c}|b|},\frac{|a|}{1+|a|+|a|^{-c}|b|}
\right)^r db da \\
& \le & 4 \int_1^\infty \int_0^\infty \left( \frac{1}{a+a^{-c} b}
\right)^r db da + 4 \int_{0}^1 \int_0^\infty \left(
\frac{a}{1+a+a^{-c} b}
\right)^r db da ~\\
& < & \infty~, \end{eqnarray*}  as soon as $r> \max(2,c+2)$, yields finiteness
of the integral for sufficiently large $\ell$, and we have established that the orbit is
$w$-temperately embedded.

\begin{remark}
 The calculations show that the index $\ell$ depends on the anisotropy parameter $c$. Hence, even though the dual orbits coincide for all groups $H_c$, the indices involved in temperate embeddedness (and thus: the number of vanishing moments required for the weighted $L^{p,q}$-estimates) may depend on the group. This serves as a reminder that temperate embeddedness is not strictly a property of the orbit (as a set), but rather a property of the group and the way it acts on the orbit.
\end{remark}

\begin{remark}
The same reasoning as in the previous two cases allows to verify that Corollary \ref{cor:L1_adm_cond_vm} is also applicable to yield sufficient criteria for analyzing vectors. 

This time, powers of the operator $Df = \partial_{x_1} f$ can be used to  induce vanishing moments of a prescribed order.
\end{remark}

\begin{remark}
 As already mentioned, the case $c=1/2$ corresponds to the shearlet group. Thus the results of Section \ref{sect:coorbit} all apply to this group. Many results in this paper have shearlet counterparts in the literature (e.g. in \cite{DaKuStTe,DaStTe10}) that typically are similar in spirit but do not exactly match in terms of the required assumptions. As examples we mention Theorem \ref{thm:ex_int_vec}, Lemma \ref{lem:osc},  Corollary \ref{cor:C_c_O_dense}, Corollary \ref{cor:L1_adm_cond_vm}. I am not aware of a shearlet version of Corollary \ref{cor:coorb_vm}.
\end{remark}

\section*{Acknowledgement}
I would like to thank Karlheinz Gr\"ochenig for illuminating discussions concerning integrable representations, in particular for conjecturing Theorem \ref{thm:ex_int_vec}. Thanks are also due to Felix Voigtlaender for a thorough reading of the paper and many useful comments. 

\bibliography{adm_gen_dil_bib.bib}
\bibliographystyle{plain}
\end{document}